\numberwithin{equation}{section}
\newtheorem{dfn}{Definition}[section]
\newtheorem{thm}[dfn]{Theorem}
\newtheorem{lma}[dfn]{Lemma}
\newtheorem{ppsn}[dfn]{Proposition}
\newtheorem{xmpl}[dfn]{Example}
\newtheorem{note}[dfn]{Note}
\DeclarePairedDelimiterX{\norm}[1]{\lVert}{\rVert}{#1}
\DeclarePairedDelimiterX{\bnorm}[1]{\big\lVert}{\big\rVert}{#1}
\DeclarePairedDelimiterX{\Bnorm}[1]{\Big\lVert}{\Big\rVert}{#1}
\begin{document}
	%\today
	
	\title[On sparse set topology using ideals in the space of reals]{On sparse set topology using ideals in the space of reals}
	
	\author[Debnath] {Indrajit Debnath}
	\address{Department of Mathematics, The University of Burdwan, Burdwan-713104, West Bengal, India}
	\email{ind31math@gmail.com}
	
	\author[Banerjee] {Amar Kumar Banerjee}
	\address{Department of Mathematics, The University of Burdwan, Burdwan-713104, West Bengal, India}
	\email{akbanerjee1971@gmail.com, akbanerjee@math.buruniv.ac.in}

	\subjclass[2020]{26E99, 54C30, 40A35}
	
	\keywords{Sparse set topology, density topology, ideal, $\mathcal{I}$-density topology}
	
	\begin{abstract}
		In this paper we have introduced the notion of $\mathcal{I}$-sparse set in the space of reals and explored some properties of the family of $\mathcal{I}$-sparse sets. Thereafter we have induced a topology namely $\mathcal{I}$-sparse set topology in the space of reals and it has been observed that this topology is finer than $\mathcal{I}-$density topology introduced by Banerjee and Debnath in \cite{banerjee 4}. We further studied some salient properties of this topology.
	\end{abstract}
	\maketitle
	
	\section{Introduction and preliminaries}
	
	In 1981 the classical notion of sparse sets in the space of reals appeared in the seminal work of D. N. Sarkhel and A. K. De \cite{sarkhel} while studying generalized integrals of the Perron type. It served as a generalization of such sets which have upper outer density zero. Subsequent development of this theory was done by S. Chakraborty and B. K. Lahiri in the setting of a topological group \cite{Chakraborty 1985,Chakraborty 1990} where they defined sparse sets by taking one of the equivalent conditions of sparse sets as in ( \cite{sarkhel}, Theorem 3.1). A topology was also generated with the help of sparse sets in \cite{Chakraborty 1990}. Sparse subsets of the real line were further investigated by T. Filipczak in \cite{Filipczak 1988, Filipczak 1993,Filipczak 2008}, by G. Horbaczewska in \cite{horbaczewska 2012} on the plane, by P. Das, M. M. Ar Rashid, A. K. Banerjee in \cite{das 2002,das,banerjee} in the setting of a metric space.

	Throughout the paper $\mathbb{R}$ stands for the set of all real numbers and $\mathbb{N}$ stands for the set of natural numbers. We shall use the notation $\mathcal{L}$ for the $\sigma$-algebra of Lebesgue measurable sets on $\mathbb{R}$, $\lambda^{\star}$ for outer Lebesgue measure and $\lambda$ for the Lebesgue measure on $\mathbb{R}$ \cite{halmos}. Wherever we write $\mathbb{R}$ it means that $\mathbb{R}$ is equipped with natural topology unless otherwise stated. We shall use the notation $2^{\mathbb{R}}$ to denote the power set of $\mathbb{R}$. The symmetric difference of two sets $A$ and $B$ be $(A  \setminus B)\cup (B \setminus A)$ and it is denoted by $A \triangle B$. By \enquote{a sequence of intervals $\{J_n\}_{n \in \mathbb{N}}$ about a point $p$} we mean $p \in \bigcap_{n \in \mathbb{N}}J_n$. The length of the interval $J_n$ will be denoted by $|J_n|$.

	Now, to introduce the notion of sparse set as in \cite{sarkhel} let us recall the classical definition of density point. For $E \in \mathcal{L}$ and a point $p \in \mathbb{R}$ we say the point $p$ is a classical density point \cite{Density topologies} of $E$ if and only if  $$\lim_{h \rightarrow{0+}} \frac{\lambda(E \cap [p-h,p+h])}{2h}=1 \ \mbox{where} \  \lambda \ \mbox{denotes the Lebesgue measure}.$$

	% Equivalently we can say the point $p \in \mathbb{R}$ is a classical density point of $E$ if and only if $$\lim_{h \rightarrow{0+}} \frac{\lambda((\mathbb{R} \setminus E) \cap [p-h,p+h])}{2h}=0.$$
	
	The set of all classical density point of $E$ is denoted by $\Phi (E)$. The collection $$\mathcal{T}_d=\{E \in \mathcal{L}: E \subset \Phi(E) \}$$ is a topology in the real line \cite{Density topologies} and it is called as the classical density topology. The celebrated Lebesgue Density Theorem is as follows.
	
	\begin{thm}\cite{Oxtoby} \label{e5}
		For any set $H \in \mathcal{L}$,	$$\lambda(H \triangle \Phi(H))=0.$$
		
	\end{thm}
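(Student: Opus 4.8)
The plan is to prove the two inclusions separately, namely to establish that $\lambda(H \setminus \Phi(H)) = 0$ and $\lambda(\Phi(H) \setminus H) = 0$, since $\lambda(H \triangle \Phi(H)) = \lambda(H \setminus \Phi(H)) + \lambda(\Phi(H) \setminus H)$. Because being a density point depends only on the behaviour of $H$ in arbitrarily small neighbourhoods of the point, and since $H = \bigcup_{n} (H \cap (-n,n))$, I would first reduce to the case where $H$ has finite Lebesgue measure; a countable union of null sets is still null. The central tool throughout will be the Vitali covering lemma, which allows one to extract from a fine family of intervals a countable pairwise disjoint subfamily covering a given set up to a set of measure zero.

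For the first inclusion, fix a rational $\alpha$ with $0 < \alpha < 1$ and set
$$A_\alpha = \Big\{ p \in H : \liminf_{h \to 0+} \frac{\lambda(H \cap [p-h, p+h])}{2h} < \alpha \Big\}.$$
Every point of $H$ that fails to be a density point lies in some $A_\alpha$, so $H \setminus \Phi(H) \subseteq \bigcup_{\alpha} A_\alpha$ over rationals $\alpha < 1$, and it suffices to show $\lambda^{\star}(A_\alpha) = 0$. Suppose to the contrary that $t := \lambda^{\star}(A_\alpha) > 0$, and choose by outer regularity an open set $U \supseteq A_\alpha$ with $\lambda(U) < t + \varepsilon$. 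For each $p \in A_\alpha$ there are arbitrarily small closed intervals $I = [p-h, p+h] \subseteq U$ with $\lambda(H \cap I) < \alpha |I|$, and these form a Vitali cover of $A_\alpha$. Extracting a disjoint subfamily $\{I_k\}$ that covers $A_\alpha$ up to a null set and using $A_\alpha \subseteq H$, I would estimate
$$\lambda^{\star}(A_\alpha) \le \sum_k \lambda(H \cap I_k) < \alpha \sum_k |I_k| \le \alpha \, \lambda(U) < \alpha (t + \varepsilon).$$
Letting $\varepsilon \to 0$ gives $t \le \alpha t$, whence $t(1 - \alpha) \le 0$; since $\alpha < 1$ this forces $t = 0$, a contradiction. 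Hence $\lambda(H \setminus \Phi(H)) = 0$.

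For the second inclusion I would apply the first part to the complement. By what has just been proved, almost every point of $\mathbb{R} \setminus H$ is a density point of $\mathbb{R} \setminus H$. At any such point the quantity $\lambda(H \cap [p-h,p+h]) / (2h)$ tends to $0$, so $p \notin \Phi(H)$. Therefore $\Phi(H) \setminus H$ differs by at most a null set from the (null) set of points of $\mathbb{R} \setminus H$ that are not density points of the complement, giving $\lambda(\Phi(H) \setminus H) = 0$. Combining the two inclusions yields $\lambda(H \triangle \Phi(H)) = 0$.

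The main obstacle I anticipate is the correct deployment of the Vitali covering lemma: verifying that the chosen intervals genuinely form a fine cover of $A_\alpha$, and that the disjoint subfamily can be taken to exhaust $A_\alpha$ up to measure zero while keeping the sum $\sum_k |I_k|$ controlled by $\lambda(U)$. Once the covering step is secured, the measure inequalities and the passage from the first inclusion to the second are routine.
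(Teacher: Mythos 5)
Your proposal is correct, but note that the paper itself gives no proof of this statement: it is the classical Lebesgue Density Theorem, quoted with a citation to Oxtoby's \emph{Measure and Category}, so there is no internal argument to compare against. Your two-step decomposition --- Vitali covering plus outer regularity to show $\lambda^{\star}(A_\alpha)=0$ for each rational $\alpha<1$, then applying that conclusion to $\mathbb{R}\setminus H$ to kill $\Phi(H)\setminus H$ via the exact containment $\Phi(H)\setminus H \subseteq (\mathbb{R}\setminus H)\setminus\Phi(\mathbb{R}\setminus H)$ --- is precisely the standard textbook proof, including the one in the cited source, and all the delicate points are handled properly: you work with outer measure so that measurability of $A_\alpha$ is never needed, and the localization to $(-n,n)$ legitimately reduces to finite outer measure, which the Vitali extraction requires. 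The only cosmetic slip is the strict inequality $\sum_k \lambda(H \cap I_k) < \alpha \sum_k |I_k|$, which over a countable family should be weakened to $\le$; this does not affect the conclusion, since $t \le \alpha(t+\varepsilon)$ for all $\varepsilon>0$ already forces $t=0$.
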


	\begin{dfn}\cite{sarkhel}
		For $B \in \mathcal{L}$ and $p \in \mathbb{R}$ the right upper (resp. lower) density of $B$ at $p$ is defined as follows:
		$$d^{+}(B,p)=\limsup_{y\rightarrow{x^+}} \frac{\lambda(B \cap (x,y))}{|(x,y)|} \left(d_{+}(B,p)=\liminf_{y\rightarrow{x^+}} \frac{\lambda(B \cap (x,y))}{|(x,y)|}\right).$$
		
		If both right upper and lower densities are equal, then the common value is denoted by $d(B,p^{+})$ and it is called as the right density of $B$ at $p$.
	\end{dfn}
	
	In a similar manner the left densities $d^{-}(B,p), d_{-}(B,p)$ and $d(B,p^{-})$ are defined. If the right and left densities are equal, then the number $d(B,p)=d(B,p^{+})=d(B,p^{-})$ is the density of $B$ at $p$. If $d(B,p)=1$, then $p$ is a density point of B and if $d(B,p)=0$, then $p$ is a dispersion point of B.
	
	\begin{dfn}\cite{sarkhel}
		A set $A \subset \mathbb{R}$ is said to be sparse at a point $x$ on the right if for any $\varepsilon>0$ there exists $h>0$ such that every interval $(\alpha,\beta) \subset (x,x+h)$ with $\alpha-x<h(\beta -x)$ contains at least one point $y$ such that $\lambda^{\star}(A \cap (x,y))<\varepsilon (y-x)$. Sparseness on the left is defined similarly. A set $A$ is sparse at $x$ if it is sparse at $x$ on the right and on the left.
	\end{dfn}
	
	\begin{note}
		In \cite{sarkhel} it was shown that, if $x$ is a dispersion point of $A$, i.e., $d(A,x)=0$, then $A$ is sparse at $x$, and if $A$ is sparse at $x$ on the right, then $\overline{d}_{+}(A,x)<1$ and $\overline{d}_{+}(\mathbb{R} \setminus A,x)=1$.
	\end{note}
	
	\begin{thm}\cite{sarkhel}
		Let $B \in \mathcal{L}$ and $p \in \mathbb{R}$. Then the following conditions are equivalent:
	\end{thm}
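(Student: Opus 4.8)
The plan is to prove the listed conditions mutually equivalent by a cycle of implications rather than pairwise, since each condition is a purely local statement about the behaviour of the outer-density ratio $\varphi(y)=\lambda^{\star}(B\cap(p,y))/(y-p)$ as $y\to p^{+}$ (together with its left-hand analogue) and the cyclic arrangement lets every implication reuse the elementary estimates proved once at the outset. Because the hypotheses are stated separately on the right and on the left, I would first reduce to the right-hand statements and recover the left-hand ones by the reflection $x\mapsto 2p-x$, so that the two-sided conclusion is simply the conjunction; this halves the bookkeeping and isolates the genuine content.

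First I would record the routine monotonicity and subadditivity properties of $\varphi$ and the complementary ratio $\psi(y)=\lambda^{\star}((\mathbb{R}\setminus B)\cap(p,y))/(y-p)=1-\varphi(y)$, and rewrite $d^{+}(B,p)$, $d_{+}(B,p)$ and $\overline{d}_{+}(B,p)$ in terms of upper and lower limits of $\varphi$. The two facts quoted in the Note, namely that a dispersion point is a point of sparseness and that right sparseness forces $\overline{d}_{+}(B,p)<1$ with $\overline{d}_{+}(\mathbb{R}\setminus B,p)=1$, serve as the template for the easy implications: to extract the interval-selection clause of the definition of sparseness from a density-type hypothesis, one fixes $\varepsilon>0$, takes the scale $h$ furnished by that hypothesis, and then in each admissible subinterval $(\alpha,\beta)\subset(p,p+h)$ with $\alpha-p<h(\beta-p)$ locates a witness point $y$ at which $\varphi(y)<\varepsilon$. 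Wherever a genuine (rather than outer) density is needed I would invoke the Lebesgue Density Theorem (Theorem~\ref{e5}) to replace $B$ by a measurable representative differing from it by a null set, so that density points become honest points of $\Phi(B)$.

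The main obstacle is the single implication that reconstructs the \emph{full} interval-selection definition, with its geometric constraint $\alpha-p<h(\beta-p)$, from the weaker sequential or density condition. Here a single passage to a limit is not enough: the witness point $y$ must be produced uniformly over all admissible pairs $(\alpha,\beta)$ at a common scale $h$. My approach would be to fix $\varepsilon$, stratify $(p,p+h)$ into the layers determined by the ratio $(\beta-p)/(\alpha-p)$, and on each layer use the hypothesis to manufacture a point where $\varphi$ is small, then verify by the subadditivity estimates that one choice of $h$ controls every layer simultaneously. The delicate part is precisely this uniformity, and it is where I expect Sarkhel--De's original measure estimates to be unavoidable.

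Finally I would assemble the cycle, checking that each arrow uses only the elementary facts about $\varphi,\psi$ and the Density Theorem established earlier, and then close by the reflection symmetry to pass from the right-hand equivalences to the stated two-sided result. I do not anticipate needing any idea beyond careful control of $\varphi$ near $p$; the entire difficulty is concentrated in the uniform handling of the interval-selection clause.
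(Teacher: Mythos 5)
First, a framing point: the paper does not prove this theorem at all --- it is quoted from Sarkhel and De \cite{sarkhel} as background, with no internal argument --- so your proposal has to stand on its own merits, and at the decisive point it does not. The substantive implications are those running from the union-stability conditions (2)--(4) back to sparseness (1), and your plan misconstrues their logical form. Each of (2)--(4) is an implication quantified over \emph{all} $H\in\mathcal{L}$; it designates no points where your ratio $\varphi$ is small, so your central step --- ``stratify $(p,p+h)$ into layers determined by $(\beta-p)/(\alpha-p)$ and use the hypothesis to manufacture a point where $\varphi$ is small on each layer'' --- has nothing to feed on. Specializing to $H=\emptyset$ extracts only $d^{+}(B,p)<1$ and $d_{+}(B,p)=0$, and the whole point of the theorem is that such conditions on the densities of $B$ alone do not capture sparseness; sparseness is exactly the \emph{closure property} of densities under unions with arbitrary measurable sets. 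The only workable route for these directions is the contrapositive: assuming (1) fails, fix $\varepsilon>0$ such that for every $h>0$ some admissible interval $(\alpha,\beta)\subset(p,p+h)$ with $\alpha-p<h(\beta-p)$ contains no witness, i.e. $\lambda^{\star}(B\cap(p,y))\geq\varepsilon(y-p)$ for every $y\in(\alpha,\beta)$; then, running this along a rapidly decreasing sequence $h_n\downarrow 0$, one must \emph{construct} an explicit measurable $H$ (a union of intervals accumulating at $p$ that fills the gaps between the chosen $(\alpha_n,\beta_n)$) with $d_{+}(H,p)=0$ yet $d_{+}(B\cup H,p)>0$, contradicting (4), and similarly for (2). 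Your proposal never produces --- indeed never mentions producing --- any such $H$, and your closing admission that the uniformity ``is where I expect Sarkhel--De's original measure estimates to be unavoidable'' defers precisely the step that constitutes the theorem. As written, this is a framing of the problem, not a proof.

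Two smaller points. The theorem is purely one-sided --- all four conditions concern behaviour at $p$ on the right --- so your reflection $x\mapsto 2p-x$ and the promised passage to a ``two-sided result'' address a claim the statement does not make. And the forward direction (1)$\Rightarrow$(2), which your outline waves through, deserves the short computation in which the constraint $\alpha-p<h(\beta-p)$ actually earns its keep: given $d^{+}(H,p)<1-\delta_0$, pick $\varepsilon<\delta_0$ and take the $h$ from sparseness; for $y$ near $p$ apply the defining clause to $\beta=y$ and $\alpha=p+\frac{h}{2}(y-p)$ to obtain a witness $y_1$ with $y_1-p\geq\frac{h}{2}(y-p)$ and $\lambda(B\cap(p,y_1))<\varepsilon(y_1-p)$; then $\lambda((B\cup H)\cap(p,y))\leq(\varepsilon+1-\delta_0)(y_1-p)+(y-y_1)\leq\bigl(1-(\delta_0-\varepsilon)\tfrac{h}{2}\bigr)(y-p)$, whence $d^{+}(B\cup H,p)\leq 1-(\delta_0-\varepsilon)\frac{h}{2}<1$. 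Note that the witness controls the ratio at \emph{every} nearby $y$, not merely along a sparse set of radii, exactly because the admissible intervals are forced to hug $p$; this quantitative use of the geometric constraint is absent from your sketch.
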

	\begin{enumerate}
		\item $B$ is sparse at $p$ on the right;
		\item If $d^{+}(H,p)<1$, then $d^{+}(B \cup H ,p)<1$, for each $H \in \mathcal{L}$;
		\item If $d^{+}(B,p)<1$ and $d_{+}(B,p)=0$, then $d^{+}(B \cup H,p)<1$ and $d_{+}(B \cup H,p)=0$, for each $H \in \mathcal{L}$;
		\item If $d_{+}(H,p)=0$, then $d_{+}(B \cup H,p)=0$, for each $H \in \mathcal{L}$.
	\end{enumerate}
	
	A simple consequence of the above theorem is given the following proposition.
	
	\begin{ppsn}\cite{sarkhel}
		Let two sets $X,Y \in \mathcal{L}$ and $p \in \mathbb{R}$.
	\end{ppsn}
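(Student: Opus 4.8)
The plan is to read the statement off the equivalence $(1)\Leftrightarrow(2)$ of the preceding theorem, which says that a measurable set $B$ is sparse at $p$ on the right precisely when $d^{+}(B\cup H,p)<1$ for every $H\in\mathcal{L}$ with $d^{+}(H,p)<1$. Assuming the proposition asserts that the union of two sets sparse at $p$ is again sparse at $p$ (the natural \enquote{simple consequence} of a union-type criterion), the whole argument amounts to applying this characterization twice, once for each of $X$ and $Y$. The measurability hypothesis $X,Y\in\mathcal{L}$ is what guarantees that the intermediate set produced by the first application is itself an admissible test set for the second.

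First I would treat the one-sided case and fix an arbitrary $H\in\mathcal{L}$ with $d^{+}(H,p)<1$, with the goal of showing $d^{+}((X\cup Y)\cup H,p)<1$. Since $Y$ is sparse at $p$ on the right, condition~(2) applied to $Y$ with the test set $H$ gives $d^{+}(Y\cup H,p)<1$. Now $Y\cup H\in\mathcal{L}$ and has right upper density below $1$ at $p$, so it is itself an admissible test set; applying condition~(2) to $X$ with test set $Y\cup H$ yields $d^{+}(X\cup(Y\cup H),p)<1$. By associativity and commutativity of union this is $d^{+}((X\cup Y)\cup H,p)<1$, and as $H$ was arbitrary the criterion~(2) holds for $X\cup Y$; the equivalence then returns that $X\cup Y$ is sparse at $p$ on the right.

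The left-sided case is obtained verbatim after replacing $d^{+}$ by $d^{-}$ and invoking the left analogue of the theorem, and the two-sided conclusion \enquote{$X\cup Y$ is sparse at $p$} follows by combining the two one-sided results. Should the proposition instead (or additionally) include a monotonicity clause --- that a measurable subset of a set sparse at $p$ on the right is again sparse at $p$ on the right --- I would derive it from the same characterization: for $Y\subset X$ and any admissible $H$ the inclusion $Y\cup H\subset X\cup H$ gives $d^{+}(Y\cup H,p)\le d^{+}(X\cup H,p)<1$, so condition~(2) passes from $X$ to $Y$.

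I do not expect a real obstacle, since no fresh density estimate is needed; the only points demanding care are the bookkeeping that keeps every intermediate set inside $\mathcal{L}$ so that condition~(2) stays applicable, and invoking the equivalence in the correct direction at each step --- from sparseness to the density inequality when we use the hypothesis on $Y$, and from the density inequality back to sparseness when we conclude for $X\cup Y$. This is exactly why the result is a \enquote{simple consequence} of the theorem rather than an argument requiring an independent computation.
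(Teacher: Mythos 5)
Your argument for the two clauses you treat is correct and is exactly the intended route: the paper supplies no independent proof at all, stating only that the proposition is \enquote{a simple consequence of the above theorem}, and your double application of the equivalence $(1)\Leftrightarrow(2)$ --- first to $Y$ with test set $H$ to get $d^{+}(Y\cup H,p)<1$, then to $X$ with the admissible test set $Y\cup H$ --- together with the monotonicity argument $d^{+}(X\cup H,p)\le d^{+}(Y\cup H,p)<1$ for $X\subset Y$, is precisely how the subset and union clauses follow. The measurability bookkeeping you flag (keeping $Y\cup H\in\mathcal{L}$ so it remains an admissible test set) is the right point of care, and your one-sided-to-two-sided reduction is unproblematic.

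The gap is one of coverage, not of method: the proposition in the paper has \emph{four} clauses, and you proved only the third and fourth. The remaining two are (i) if $X$ is sparse at $p$ on the right, then $d^{+}(X,p)<1$ and $d_{+}(X,p)=0$; and (ii) if $d(X,p^{+})=0$, then $X$ is sparse at $p$ on the right. Both follow in the same spirit, and you should record the short arguments. For (i), specialize the theorem's conditions $(2)$ and $(4)$ to $H=\emptyset$: since $d^{+}(\emptyset,p)=0<1$ and $d_{+}(\emptyset,p)=0$, sparseness of $X$ yields $d^{+}(X,p)<1$ and $d_{+}(X,p)=0$. For (ii), the hypothesis $d(X,p^{+})=0$ gives $d^{+}(X,p)=0$, so for any $H\in\mathcal{L}$ with $d^{+}(H,p)<1$ subadditivity of the right upper density gives $d^{+}(X\cup H,p)\le d^{+}(X,p)+d^{+}(H,p)=d^{+}(H,p)<1$, verifying condition $(2)$ and hence sparseness. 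With these two additions your proof covers the full statement; as written it is incomplete rather than wrong.
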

	\begin{enumerate}
		\item If $X$ is sparse at $p$ on the right, then $d^{+}(X,p)<1$ and $d_{+}(X,p)=0$;
		\item If $d(X,p^{+})=0$, then $X$ is sparse at $p$ on the right;
		\item If $X \subset Y$ and $Y$ is sparse at $p$ on the right, then $X$ is sparse at $p$ on the right;
		\item If $X$ and $Y$ are sparse at $p$ on the right, then $X \cup Y$ is sparse at $p$ on the right.   
	\end{enumerate}
	
	\begin{xmpl}\cite{sarkhel}
		Let $c>1$, $a_n=\frac{1}{c^{n^2+1}}$ and $b_n=\frac{1}{c^{n^2}}$ for $n \in \mathbb{N}$. The set $E=\bigcup_{n=1}^{\infty}(a_n,b_n)$ is sparse at $0$ on the right and $d^{+}(E,0)>1-\frac{1}{c}>0$.
	\end{xmpl}

	In order to incorporate the properties of larger class of non-convergent sequences the idea of convergence of real sequences was generalized to the notion of statistical convergence \cite{Fast,Schoenberg} followed by the idea of ideal convergence \cite{Kostyrko 2000}. Let us recall the definition of asymptotic density. For $K \subset \mathbb{N}$ we denote $K(n)$ to be the set $\{k \in K : k \leq n\}$ and $|K(n)|$ is the cardinality of $K(n)$. The asymptotic density of $K$ is defined by  $d(K)=\lim_{n\rightarrow{\infty}}\frac{|K(n)|}{n}$, provided the limit exists. The notion of asymptotic density was used to define the idea of statistical convergence by Fast \cite{Fast}, generalizing the idea of usual convergence of real sequences. A sequence $\{x_n\}_{n \in \mathbb{N}}$ of real numbers is said to be statistically convergent to $x_0$ if for given any $\epsilon > 0$ the set $K(\epsilon) = \{k\in \mathbb{N} : |x_k - x_0| \geq \epsilon\}$ has asymptotic density zero.

	After this pioneering work, the theory of statistical convergence of real sequences were generalized to the idea of $\mathcal{I}$-convergence of real sequences by P. Kostyrko et al., using the notion of ideal  $\mathcal{I}$ of subsets of $\mathbb{N}$, the set of natural numbers. We shall use the notation $2^{\mathbb{N}}$ to denote the power set of $\mathbb{N}$.
	
	\begin{dfn}\cite{Kostyrko 2000}
		A nonvoid class  $\mathcal{I} \subset 2^\mathbb{N}$ is called an ideal if $A,B \in  \mathcal{I}$ implies $A \cup B \in  \mathcal{I}$ and $A\in  \mathcal{I}, B\subset A$ imply $B\in \mathcal{I}$. Clearly $\{\phi\}$ and $2^{\mathbb{N}}$ are ideals of $\mathbb{N}$ which are called trivial ideals. An ideal is called nontrivial if it is not trivial.
	\end{dfn}

	$\mathcal{I}$ is admissible if it contains all singletons. By Fin we denote the ideal of all finite subsets of $\mathbb{N}$. It is easy to verify that the family $\mathcal{J}=\{A \subset \mathbb{N}: d(A)=0\}$ forms a non-trivial admissible ideal of subsets of $\mathbb{N}$. If $\mathcal{I}$ is a proper non-trivial ideal then the family of sets $\mathcal{F}(\mathcal{I}) = \{M\subset \mathbb{N} : \mathbb{N}\setminus M \in \mathcal{I}\}$ is a filter on $\mathbb{N}$ and it is called the filter associated with the ideal $\mathcal{I}$ of $\mathbb{N}$.

	\begin{dfn}\cite{Kostyrko 2000}
		A sequence $\{x_n\}_{n \in \mathbb{N}}$ of real numbers is said to be $\mathcal{I}$-convergent to $x_0$ if the set $K(\epsilon) = \{k\in \mathbb{N} : |x_k - x_0| \geq \epsilon\}$ belongs to $\mathcal{I}$ for any $\epsilon>0$.
	\end{dfn}
	
	If $\mathcal{I}=Fin$ then Fin-convergence coincides with the usual notion of convergence  of real numbers. If $\mathcal{I}$ is an admissible ideal then for a sequence of real numbers, the usual convergence implies $\mathcal{I}$-convergence with the same limit. Further many works were carried out in this direction by many authors \cite{banerjee 2,banerjee 3,Lahiri 2005}. K. Demirci  introduced the notion of  $\mathcal{I}$-limit superior and inferior of real sequence and proved several basic properties.
	
	\begin{dfn}\cite{Demirci}
		Let $\mathcal{I}$ be an admissible ideal in $\mathbb{N}$ and $x=\{x_n\}_{n \in \mathbb{N}}$ be a real sequence. Let,
		$B_x =\{b\in \mathbb{R} : \{k:x_k > b\}\notin \mathcal{I}\}$ and $A_x =\{a\in \mathbb{R} : \{k:x_k < a\}\notin \mathcal{I}\}$. Then the $\mathcal{I}$-limit superior of $x$ is given by,
		\begin{equation*}
			\mathcal{I}-\limsup  x  = \left\{
			\begin{array}{lr}
				\sup B_x & \text{if}\ B_x \neq \phi\\
				- \infty & \text{if}\ B_x = \phi
				
			\end{array} 
			\right.
		\end{equation*}

		and the $\mathcal{I}$-limit inferior of $x$ is given by,
		\begin{equation*}
			\mathcal{I}-\liminf  x = \left\{
			\begin{array}{lr}
				\inf A_x & \text{if}\ A_x \neq \phi\\
				\infty & \text{if}\ A_x = \phi
				
			\end{array} 
			\right.
		\end{equation*}
	\end{dfn}
	
	Further Lahiri and Das \cite{Lahiri 2003} carried out some more works in this direction. Throughout the paper the ideal $\mathcal{I}$ will always stand for a nontrivial admissible ideal of subsets of $\mathbb{N}$.

	Sparse set topology is the object of our interest and play a central role in our study. The goal of this paper is to investigate on $\mathcal{I}$-sparse set and $\mathcal{I}$-sparse set topology and its various properties. In section \ref{sec2} of this paper we first proved the finite sub-additivity and monotonicity of $\mathcal{I}$-density function \cite{banerjee 4} and then shown that $\mathcal{I}$-density topology on $\mathbb{R}$ is not separable.
	In section \ref{sec3} we have introduced the notion of $\mathcal{I}$-sparse set in the space of reals using the notion of ideal $\mathcal{I}$ of subsets of natural and explored some properties of the family of $\mathcal{I}$-sparse sets. In section \ref{sec4} we have induced a topology namely $\mathcal{I}$-sparse set topology in the space of reals and it has been observed that this topology is finer than $\mathcal{I}-$density topology introduced by Banerjee and Debnath in \cite{banerjee 4}. We further studied some salient properties of this topology.

	\section{$\mathcal{I}$-density}\label{sec2}
	
	In recent time A. K. Banerjee and I. Debnath have found a new way to generalize density topology using ideals in \cite{banerjee 4} which is as follows:
	
	\begin{dfn} \cite{banerjee 4}
		For $E\in \mathcal{L}$, $p\in \mathbb{R}$ and $n \in \mathbb{N}$ the upper $\mathcal{I}$-density of $E$ at the point $p$ denoted by $\mathcal{I}-d^-(p,E)$ and the lower $\mathcal{I}$-density of $E$ at the point $p$ denoted by $\mathcal{I}-d_-(p,E)$ are defined as follows: 
		Suppose $\{J_n\}_{n \in \mathbb{N}}$ be a sequence of closed intervals about $p$ such that
		\begin{center}
			$\mathscr{S}(J_n)=\{n\in \mathbb{N}:0<\lambda(J_n)<\frac{1}{n}\} \in \mathcal{F}(\mathcal{I}).$
		\end{center} 
		% Now we consider the following collection
		% \begin{center}
			% 	$\mathscr{C}_{p(\mathcal{I})}=\left\{\{J_n\}_{n \in \mathbb{N}}:\{J_n\}_{n \in \mathbb{N}} \mbox{be any sequence of closed intervals about} \ p \ \mbox{such that} \ \mathscr{S}(J_n) \in \mathcal{F}(\mathcal{I}) \right\}$
			% \end{center}
		For any such $\{J_n\}_{n \in \mathbb{N}}$ we take
		\begin{equation*}
			x_n = \frac{\lambda(J_n \cap E)}{\lambda(J_n)}  \ \mbox{for all} \ n \in \mathbb{N}.
		\end{equation*}
		Then $\{x_n\}_{n \in \mathbb{N}}$ is a sequence of non-negative real numbers. Now if 
		$$B_{x_k} =\{b\in \mathbb{R} : \{k:x_k > b\}\notin \mathcal{I}\}$$ and $$A_{x_k} =\{a\in \mathbb{R} : \{k:x_k < a\}\notin \mathcal{I}\}$$ we define,
		\begin{align*}
			\mathcal{I}-d^-(p,E) &= \sup \{ \sup B_{x_n}: \{J_n\}_{n \in \mathbb{N}} \ \mbox{such that} \ \mathscr{S}(J_n) \in \mathcal{F}(\mathcal{I})\} \\
			&= \sup \{\mathcal{I}-\limsup \ x_n: \{J_n\}_{n \in \mathbb{N}} \ \mbox{such that} \ \mathscr{S}(J_n) \in \mathcal{F}(\mathcal{I})\} 
		\end{align*}
		and 
		\begin{align*}
			\mathcal{I}-d_{-}(p,E) &= \inf \{ \inf A_{x_n}: \{J_n\}_{n \in \mathbb{N}} \ \mbox{such that} \ \mathscr{S}(J_n) \in \mathcal{F}(\mathcal{I})\} \\
			&= \inf \{\mathcal{I}-\liminf \ x_n: \{J_n\}_{n \in \mathbb{N}} \ \mbox{such that} \ \mathscr{S}(J_n) \in \mathcal{F}(\mathcal{I})\}. 
		\end{align*}
		
		In the above two expressions it is to be understood that $\{J_n\}_{n \in \mathbb{N}}$'s are closed intervals about the point $p$. Now, if $\mathcal{I}-d_-(p,E)=\mathcal{I}-d^-(p,E)$ then we denote the common value by $\mathcal{I}-d(p,E)$ which we call as $\mathcal{I}$-density of $E$ at the point $p$.
	\end{dfn}
	
	A point $p_0 \in \mathbb{R}$ is an $\mathcal{I}$-\textit{density point} of $E\in \mathcal{L}$ if $\mathcal{I}-d(p_0,E)=1$. 
	
	If a point $p_0\in \mathbb{R}$ is an $\mathcal{I}$-density point of the set $\mathbb{R}\setminus E$, then $p_0$ is an $\mathcal{I}$-\textit{dispersion point} of $E$.

	\begin{note}
		In \cite{banerjee 4} it was shown if $\mathcal{I}=\mathcal{I}_{fin}$ where $\mathcal{I}_{fin}$ is taken as the class of all finite subsets of $\mathbb{N}$, then Definition 3.1 coincides with the definition introduced by N. F. G. Martin \cite{Martin} of metric density.	The notion of $\mathcal{I}$-density point is more general than the notion of density point as the collection of intervals about the point $p$ considered in case of  $\mathcal{I}$-density is larger than that considered in case of classical density. For an example illustrating this fact see \cite{banerjee 4}.
	\end{note}
	
	\begin{thm}\cite{banerjee 4}
		Almost all points of an arbitrary measurable set $H$ are points of $\mathcal{I}$-density for $H$.
	\end{thm}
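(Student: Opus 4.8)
The plan is to derive this $\mathcal{I}$-density version of the Lebesgue Density Theorem directly from the classical statement recorded in Theorem~\ref{e5}, by proving the pointwise implication that every classical density point of $H$ is an $\mathcal{I}$-density point of $H$. Indeed, Theorem~\ref{e5} gives $\lambda(H\triangle\Phi(H))=0$, so almost every $p\in H$ satisfies $\lim_{h\to 0^+}\frac{\lambda(H\cap[p-h,p+h])}{2h}=1$; hence it suffices to show that any such $p$ has $\mathcal{I}-d(p,H)=1$. For an arbitrary sequence $\{J_n\}_{n\in\mathbb{N}}$ of closed intervals about $p$ with $\mathscr{S}(J_n)\in\mathcal{F}(\mathcal{I})$ the numbers $x_n=\frac{\lambda(J_n\cap H)}{\lambda(J_n)}$ all lie in $[0,1]$, and I will prove that $\{x_n\}_{n\in\mathbb{N}}$ is $\mathcal{I}$-convergent to $1$, so that $\mathcal{I}-\liminf x_n=\mathcal{I}-\limsup x_n=1$. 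Taking the defining infimum and supremum over all admissible sequences then forces $\mathcal{I}-d_-(p,H)=\mathcal{I}-d^-(p,H)=1$, i.e. $\mathcal{I}-d(p,H)=1$.

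The first step is to upgrade the symmetric density condition into a statement that is uniform over all small intervals containing $p$. Writing $H^c=\mathbb{R}\setminus H$, the density hypothesis reads $\frac{\lambda(H^c\cap[p-h,p+h])}{2h}\to 0$ as $h\to 0^+$, so given $\varepsilon>0$ I may pick $\delta>0$ with $\frac{\lambda(H^c\cap[p-h,p+h])}{2h}<\frac{\varepsilon}{2}$ for all $0<h<\delta$. If $I$ is any interval with $p\in I$ and $0<|I|=\ell<\delta$, then $I\subseteq[p-\ell,p+\ell]$, and from $\lambda(I)=\lambda(H\cap I)+\lambda(H^c\cap I)$ together with monotonicity of measure I get $\frac{\lambda(H\cap I)}{|I|}=1-\frac{\lambda(H^c\cap I)}{\ell}\ge 1-\frac{\lambda(H^c\cap[p-\ell,p+\ell])}{\ell}>1-\varepsilon$. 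Thus for every $\varepsilon>0$ there is $\delta>0$ such that $\frac{\lambda(H\cap I)}{|I|}>1-\varepsilon$ for every interval $I\ni p$ of length less than $\delta$.

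It remains to do the bookkeeping against the ideal. Fix an admissible sequence $\{J_n\}_{n\in\mathbb{N}}$ and set $M=\mathscr{S}(J_n)=\{n\in\mathbb{N}:0<\lambda(J_n)<\frac{1}{n}\}\in\mathcal{F}(\mathcal{I})$, so that $\mathbb{N}\setminus M\in\mathcal{I}$. For every $n\in M$ with $n>\frac{1}{\delta}$ the interval $J_n$ contains $p$ and has length $\lambda(J_n)<\frac{1}{n}<\delta$, whence $x_n>1-\varepsilon$ by the previous step. Consequently $\{n:x_n\le 1-\varepsilon\}\subseteq(\mathbb{N}\setminus M)\cup\{n:n\le\frac{1}{\delta}\}$, which is the union of a member of $\mathcal{I}$ with a finite set and therefore lies in $\mathcal{I}$, since $\mathcal{I}$ is a nontrivial admissible ideal. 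As $x_n\le 1$, this set equals $\{n:|x_n-1|\ge\varepsilon\}$, and letting $\varepsilon$ range over all positive reals shows that $\{x_n\}_{n\in\mathbb{N}}$ is $\mathcal{I}$-convergent to $1$, completing the argument. I expect the delicate point to be exactly this last reduction: the $\mathcal{I}$-density ranges over a much larger family of interval sequences than the classical density, so the proof must use that a classical density point controls the ratio uniformly over all small intervals about $p$, and not only along the symmetric ones; once that uniformity is secured, the $\mathcal{F}(\mathcal{I})$-largeness of $\mathscr{S}(J_n)$ and the admissibility of $\mathcal{I}$ together absorb the finitely many exceptional indices.
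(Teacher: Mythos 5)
Your proof is correct, and it follows what is essentially the only natural route to this statement, which the present paper quotes from \cite{banerjee 4} without reproducing a proof: reduce to the classical Lebesgue Density Theorem (Theorem \ref{e5}), upgrade symmetric density at $p$ to uniform control over \emph{all} intervals containing $p$ via the factor-two comparison $I\subseteq[p-\ell,p+\ell]$, and then absorb the exceptional indices $\{n : x_n\le 1-\varepsilon\}\subseteq(\mathbb{N}\setminus\mathscr{S}(J_n))\cup\{n:n\le 1/\delta\}$ into the admissible ideal $\mathcal{I}$, so that every admissible sequence $\{J_n\}$ gives $\mathcal{I}$-convergence of $x_n$ to $1$ and hence both the supremum and the infimum in the definition equal $1$ (your appeal to the fact that $\mathcal{I}$-convergence forces $\mathcal{I}$-$\liminf=\mathcal{I}$-$\limsup$ is covered by the cited work of Demirci). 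This matches the argument in the source, which generalizes Martin's observation for metric density alluded to in the paper's Note 2.3, so there is nothing to repair.
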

	
	\begin{lma} \label{s1} For each $p \in \mathbb{R}$ the set function $\mathcal{I}-d(p,.)$ defined on all subsets of $\mathbb{R}$ to unit interval $[0,1]$ is finitely subadditive and monotone.
		
	\end{lma}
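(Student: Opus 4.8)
The plan is to reduce both assertions to the corresponding statements about the $\mathcal{I}$-limit superior of the ratio sequences, and then pass to the supremum over all admissible interval sequences. Fix $p \in \mathbb{R}$. For a set $E$ and any sequence $\{J_n\}_{n\in\mathbb{N}}$ of closed intervals about $p$ with $\mathscr{S}(J_n)\in\mathcal{F}(\mathcal{I})$, write $x_n^{E}=\lambda^\star(J_n\cap E)/\lambda(J_n)$, using the outer measure $\lambda^\star$ so that the ratios are defined for arbitrary subsets of $\mathbb{R}$; each $x_n^E$ lies in $[0,1]$, so all the density values stay in $[0,1]$.

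For monotonicity I would take $A\subset B$. Then $\lambda^\star(J_n\cap A)\le\lambda^\star(J_n\cap B)$, so $x_n^{A}\le x_n^{B}$ for every $n$ and for each fixed admissible $\{J_n\}$. I would next record the elementary fact that termwise domination is preserved by the $\mathcal{I}$-limit superior: since $\{k:x_k^{A}>b\}\subseteq\{k:x_k^{B}>b\}$ and ideals are closed under subsets, the left set lying outside $\mathcal{I}$ forces the right set outside $\mathcal{I}$ as well, whence $B_{x^{A}}\subseteq B_{x^{B}}$ and $\mathcal{I}-\limsup x_n^{A}\le\mathcal{I}-\limsup x_n^{B}$. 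Taking the supremum over all admissible $\{J_n\}$ gives $\mathcal{I}-d^-(p,A)\le\mathcal{I}-d^-(p,B)$, which is the desired monotonicity.

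For finite subadditivity I would again fix an admissible $\{J_n\}$ and invoke subadditivity of outer measure, $\lambda^\star(J_n\cap(A\cup B))\le\lambda^\star(J_n\cap A)+\lambda^\star(J_n\cap B)$, giving $x_n^{A\cup B}\le x_n^{A}+x_n^{B}$ for all $n$. Combining the monotonicity of $\mathcal{I}-\limsup$ just established with its subadditivity, $\mathcal{I}-\limsup(a_n+b_n)\le\mathcal{I}-\limsup a_n+\mathcal{I}-\limsup b_n$ (a property of the $\mathcal{I}$-limit superior from \cite{Demirci}), yields, for this fixed sequence,
\[
\mathcal{I}-\limsup x_n^{A\cup B}\;\le\;\mathcal{I}-\limsup x_n^{A}+\mathcal{I}-\limsup x_n^{B}\;\le\;\mathcal{I}-d^-(p,A)+\mathcal{I}-d^-(p,B),
\]
where in the last step each summand is bounded by its supremum over all admissible interval sequences. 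As the right-hand side no longer depends on $\{J_n\}$, taking the supremum on the left gives $\mathcal{I}-d^-(p,A\cup B)\le\mathcal{I}-d^-(p,A)+\mathcal{I}-d^-(p,B)$.

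The step I expect to be the main obstacle is the subadditivity of the $\mathcal{I}$-limit superior; if it is not available in the cited form I would prove it directly: for any $b>\mathcal{I}-\limsup a_n$ and $c>\mathcal{I}-\limsup b_n$ the sets $\{k:a_k>b\}$ and $\{k:b_k>c\}$ lie in $\mathcal{I}$, so does their union, and off that union $a_k+b_k\le b+c$, forcing $\mathcal{I}-\limsup(a_n+b_n)\le b+c$; letting $b,c$ decrease to the respective $\mathcal{I}$-limsups finishes it. A secondary point to watch is the order of the two suprema: subadditivity must be established for each fixed $\{J_n\}$ \emph{before} passing to the supremum, since the interval sequence extremal for $A\cup B$ need not be extremal for $A$ or $B$ separately. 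Finally, the statements for $\mathcal{I}-d(p,\cdot)$ follow whenever the densities in question exist, since there the density coincides with the upper density.
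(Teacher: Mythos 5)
Your proposal is correct and follows essentially the same route as the paper: fix an admissible interval sequence, compare the ratio sequences termwise, apply monotonicity and subadditivity of the $\mathcal{I}$-limit superior, and only then pass to the supremum over interval sequences, with the reduction to $\mathcal{I}-d^-(p,\cdot)$ at the start. The only differences are embellishments in your favor — you use $\lambda^\star$ so the argument literally covers non-measurable sets (the paper's proof silently restricts to measurable $H,G$ despite the lemma's phrasing), and you supply proofs of the $\mathcal{I}$-limsup monotonicity and subadditivity facts that the paper uses without justification.
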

	
	\begin{proof} Since $\mathcal{I}$-density exists at the point $p$, so upper $\mathcal{I}$-density and lower $\mathcal{I}$-density at the point $p$ are equal and equals to $\mathcal{I}$-density at the point $p$. Thus it is sufficient to show that $\mathcal{I}-d^{-}(p,.)$ is finitely subadditive and monotone. Let $\{Q_k\}_{k \in \mathbb{N}}$ be any sequence of closed intervals about the point $p$ such that $\mathscr{S}(Q_k) \in \mathcal{F}(\mathcal{I})$. For $H$ and $G$ any two measurable subsets of $\mathbb{R}$, let us take the real sequences $\{\zeta_n\}_{n \in \mathbb{N}},\{\eta_n\}_{n \in \mathbb{N}},\{\xi_n\}_{n \in \mathbb{N}}$ defined as 
		
		$$\zeta_n = \frac{\lambda(H \cap Q_n)}{\lambda (Q_n)}, \eta_n = \frac{\lambda(G \cap Q_n)}{\lambda (Q_n)}, \xi_n = \frac{\lambda((H \cup G) \cap Q_n)}{\lambda (Q_n)} \ \forall n \in \mathbb{N}.$$ 
		Then each of $\{\zeta_n\}_{n \in \mathbb{N}},\{\eta_n\}_{n \in \mathbb{N}},\{\xi_n\}_{n \in \mathbb{N}}$ is bounded hence $\mathcal{I}$-bounded. For any $n \in \mathscr{S}(Q_k)$, 
		$$\lambda((H\cup G)\cap Q_n) \leq \lambda(H\cap Q_n)+\lambda(G\cap Q_n).$$ So, $\xi_n \leq \zeta_n + \eta_n$ for $n \in \mathscr{S}(Q_k)$. So, it follows that 
		
		\begin{align*}
			\mathcal{I}-\limsup \xi_n & \leq \mathcal{I}-\limsup (\zeta_n+\eta_n)\\
			& \leq \mathcal{I}-\limsup \zeta_n + \mathscr{I}-\limsup \eta_n\\
			& \leq \sup \{\mathcal{I}-\limsup \zeta_n: \{Q_n\}_{n \in \mathbb{N}} \ \mbox{such that} \ \mathscr{S}(Q_n) \in \mathcal{F}(\mathcal{I})\}\\
			& \ \qquad + \sup \{\mathcal{I}-\limsup \eta_n :\{Q_n\}_{n \in \mathbb{N}} \ \mbox{such that} \ \mathscr{S}(Q_n) \in \mathcal{F}(\mathcal{I})\}\\
			& = \mathcal{I}-d^{-}(p,H)+ \mathcal{I}-d^{-}(p,G).
		\end{align*}

		Hence,
		\begin{equation}
			\begin{split}
				\mathcal{I}-d^{-}(p,H \cup G) &=\sup \{\mathcal{I}-\limsup \xi_n : \{Q_n\}_{n \in \mathbb{N}} \ \mbox{such that} \ \mathscr{S}(Q_n) \in \mathcal{F}(\mathcal{I})\}\\
				& \leq \sup \{\mathcal{I}-\limsup \zeta_n: \{Q_n\}_{n \in \mathbb{N}} \ \mbox{such that} \ \mathscr{S}(Q_n) \in \mathcal{F}(\mathcal{I})\}\\
				& \ \qquad + \sup \{\mathcal{I}-\limsup \eta_n :\{Q_n\}_{n \in \mathbb{N}} \ \mbox{such that} \ \mathscr{S}(Q_n) \in \mathcal{F}(\mathcal{I})\}\\
				& = \mathcal{I}-d^{-}(p,H)+ \mathcal{I}-d^{-}(p,G).
			\end{split}
		\end{equation}
		So, $\mathcal{I}-d^{-}(p,.)$ is finitely subadditive.
		
		Again if $H \subset G$, then since $ \frac{\lambda(H \cap Q_n)}{\lambda (Q_n)} \leq \frac{\lambda(G \cap Q_n)}{\lambda (Q_n)}$ we have 
		\begin{align*}
			\mathcal{I}-\limsup \zeta_n & \leq \mathcal{I}-\limsup \eta_n\\
			& \leq \sup \{\mathcal{I}-\limsup \eta_n: \{Q_n\}_{n \in \mathbb{N}} \ \mbox{such that} \ \mathscr{S}(Q_n) \in \mathcal{F}(\mathcal{I})\}\\
			& = \mathcal{I}-d^{-}(p,G).
		\end{align*}
		Hence,
		\begin{equation}
			\begin{split}
				\mathcal{I}-d^{-}(p,H) &=\sup \{\mathcal{I}-\limsup \zeta_n : \{Q_n\}_{n \in \mathbb{N}} \ \mbox{such that} \ \mathscr{S}(Q_n) \in \mathcal{F}(\mathcal{I})\}\\
				& \leq \sup \{\mathcal{I}-\limsup \eta_n: \{Q_n\}_{n \in \mathbb{N}} \ \mbox{such that} \ \mathscr{S}(Q_n) \in \mathcal{F}(\mathcal{I})\}\\
				& = \mathcal{I}-d^{-}(p,G).
			\end{split}
		\end{equation}
		So, $\mathcal{I}-d^{-}(p,.)$ is monotone. This completes the proof.
		
	\end{proof}
	
	\begin{dfn}\cite{banerjee 4}
		A measurable set $E \subset \mathbb{R}$ is $\mathcal{I}-d$ open iff  $\mathcal{I}-d_-(x,E)=1$ $\forall x \in E$
	\end{dfn}
	Let us take the collection $\mathbf{T}_\mathcal{I}=\{A \subset \mathbb{R}:$ $A$ is $\mathcal{I}-d$ open $\}$.

	\begin{thm} \cite{banerjee 4}
		The collection $\mathbf{T}_\mathcal{I}$ is a topology on $\mathbb{R}$.
	\end{thm}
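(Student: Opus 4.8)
The plan is to verify the three axioms of a topology for $\mathbf{T}_\mathcal{I}$. That $\emptyset \in \mathbf{T}_\mathcal{I}$ is vacuous, and $\mathbb{R} \in \mathbf{T}_\mathcal{I}$ because for every $p$ and every admissible sequence $\{J_n\}$ of closed intervals about $p$ (i.e. with $\mathscr{S}(J_n)\in\mathcal{F}(\mathcal{I})$) the ratios $\lambda(J_n \cap \mathbb{R})/\lambda(J_n)$ equal $1$, so $\mathcal{I}-d_-(p,\mathbb{R}) = 1$. Before treating unions and intersections I would record the elementary duality $\mathcal{I}-d^-(p,\mathbb{R}\setminus B) = 1 - \mathcal{I}-d_-(p,B)$, which comes from the identity $\mathcal{I}-\limsup(1-x_n) = 1 - (\mathcal{I}-\liminf x_n)$ applied to $x_n = \lambda(J_n \cap B)/\lambda(J_n)$ together with the sup--inf interchange in the definition. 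In particular, $p$ is an $\mathcal{I}$-density point of $B$ exactly when $\mathcal{I}-d^-(p,\mathbb{R}\setminus B) = 0$.

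For finite intersections it suffices, by induction, to treat $E_1 \cap E_2$ with $E_1, E_2 \in \mathbf{T}_\mathcal{I}$; this set is measurable. For $x \in E_1 \cap E_2$ the duality gives $\mathcal{I}-d^-(x, \mathbb{R}\setminus E_i) = 0$ for $i = 1,2$, and since $\mathbb{R}\setminus(E_1 \cap E_2) = (\mathbb{R}\setminus E_1) \cup (\mathbb{R}\setminus E_2)$, the finite subadditivity of the upper $\mathcal{I}$-density established in the proof of Lemma \ref{s1} yields $\mathcal{I}-d^-(x, \mathbb{R}\setminus(E_1\cap E_2)) = 0$. By the duality again $\mathcal{I}-d_-(x, E_1\cap E_2) = 1$, so $E_1 \cap E_2 \in \mathbf{T}_\mathcal{I}$.

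The main obstacle is closure under arbitrary unions, where the difficulty is not the density condition but the \emph{measurability} of the union, since an arbitrary union of measurable sets need not be measurable. Let $\{E_\alpha\}_{\alpha}$ be a family in $\mathbf{T}_\mathcal{I}$ and $E = \bigcup_\alpha E_\alpha$. My plan is to pass to a measurable kernel $K$ of $E$, i.e. a measurable set $K \subseteq E$ such that every measurable subset of $E \setminus K$ is null. For $x \in E$ choose $\alpha$ with $x \in E_\alpha$; then $E_\alpha \setminus K$ is a measurable subset of $E \setminus K$, hence null, so for every admissible $\{J_n\}$ the ratios for $E_\alpha$ are dominated by those for $K$. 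Monotonicity of the lower $\mathcal{I}$-density (which follows exactly as in Lemma \ref{s1}) then gives $\mathcal{I}-d_-(x,K) \ge \mathcal{I}-d_-(x,E_\alpha) = 1$, so every point of $E$ is an $\mathcal{I}$-density point of $K$.

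It remains to deduce that $E$ is measurable and that $\mathcal{I}-d_-(x,E) = 1$ on $E$. Writing $\Phi_\mathcal{I}(K)$ for the set of $\mathcal{I}$-density points of $K$, the previous step says $E \subseteq \Phi_\mathcal{I}(K)$, and I claim $\lambda(\Phi_\mathcal{I}(K)\setminus K) = 0$. Indeed a point of $\Phi_\mathcal{I}(K)\setminus K$ lies in $\mathbb{R}\setminus K$ and, by the duality, is an $\mathcal{I}$-dispersion point of $\mathbb{R}\setminus K$; but by the $\mathcal{I}$-density theorem applied to the measurable set $\mathbb{R}\setminus K$, almost every point of $\mathbb{R}\setminus K$ is an $\mathcal{I}$-density point of $\mathbb{R}\setminus K$, so the exceptional set is null. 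Hence $E \setminus K \subseteq \Phi_\mathcal{I}(K)\setminus K$ is null, $E = K \cup (E\setminus K)$ is measurable, and finally monotonicity gives $\mathcal{I}-d_-(x,E) \ge \mathcal{I}-d_-(x,K) = 1$ for $x \in E$, so $E \in \mathbf{T}_\mathcal{I}$. I expect the delicate point to be justifying the measurable-kernel reduction and the two-sided use of the $\mathcal{I}$-density theorem (through the complement $\mathbb{R}\setminus K$ together with the duality), rather than any of the density estimates themselves.
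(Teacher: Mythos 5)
Your proposal is correct, and it reconstructs what is surely the intended argument: this paper only imports the theorem from \cite{banerjee 4} without reproducing a proof, and your route --- the complementation duality $\mathcal{I}-d^-(x,\mathbb{R}\setminus B)=1-\mathcal{I}-d_-(x,B)$ (resting on $\mathcal{I}-\limsup(1-x_n)=1-\mathcal{I}-\liminf x_n$ from \cite{Demirci}, a duality the paper itself uses, e.g.\ in the proof of its Theorem 3.9), subadditivity/monotonicity as in Lemma \ref{s1} for finite intersections, and the measurable-kernel-plus-$\mathcal{I}$-density-theorem device for arbitrary unions --- is precisely the $\mathcal{I}$-analogue of the classical Goffman--Neugebauer--Nishiura proof for the density topology. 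The two points you flag as delicate are indeed sound: a measurable kernel of an arbitrary $E\subseteq\mathbb{R}$ exists and can be produced inside the paper's own toolkit as $\mathbb{R}\setminus C$ where $C$ is a measurable cover (in the sense of \cite{halmos}) of $\mathbb{R}\setminus E$; and the two-sided appeal to the $\mathcal{I}$-density theorem is legitimate because that theorem holds for an arbitrary measurable set, in particular for $\mathbb{R}\setminus K$, after which completeness of Lebesgue measure makes $E=K\cup(E\setminus K)$ measurable. One cosmetic caveat: Lemma \ref{s1} is stated for $\mathcal{I}-d(p,\cdot)$, but as you implicitly note, its proof actually establishes subadditivity and monotonicity for the upper $\mathcal{I}$-density and the same computation gives monotonicity for the lower one, which is the form your intersection and union steps require; also note your kernel step uses monotonicity only up to the null set $E_\alpha\setminus K$, via the ratio domination $\lambda(J_n\cap E_\alpha)\leq\lambda(J_n\cap K)$, exactly as you wrote.
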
 
	
	\begin{dfn} \cite{banerjee 4}
		A point $x \in \mathbb{R}$ is a $\mathcal{I}-d$ limit point of a set $E \subset \mathbb{R}$ (not necessarily measurable) if and only if $\mathcal{I}-d^{-}(x,E)>0$ where instead of taking measure $m$ outer measure $m^{*}$ is taken.
	\end{dfn}
	
	\begin{thm} \cite{banerjee 4} \label{s99} In the space $(\mathbb{R}, \mathbf{T}_\mathcal{I})$ given any Lebesgue measurable set $E \subset \mathbb{R}$, $\lambda(E)=0$ if and only if $E$ is $\mathcal{I}-d$ closed and discrete.
	\end{thm}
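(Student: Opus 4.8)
The plan is to prove both implications directly, exploiting that altering a set by a Lebesgue-null set (in particular by a single point) changes none of the ratios $\lambda(J_n\cap\,\cdot\,)/\lambda(J_n)$, and hence none of the $\mathcal{I}$-densities. For the forward implication, suppose $\lambda(E)=0$. First I would show $\mathbb{R}\setminus E$ is $\mathcal{I}-d$ open. Fix $x\in\mathbb{R}\setminus E$ and any admissible sequence $\{J_n\}_{n\in\mathbb{N}}$ of closed intervals about $x$, i.e. with $\mathscr{S}(J_n)\in\mathcal{F}(\mathcal{I})$. For every $n\in\mathscr{S}(J_n)$ we have $0<\lambda(J_n)$ and $\lambda(J_n\cap(\mathbb{R}\setminus E))=\lambda(J_n)-\lambda(J_n\cap E)=\lambda(J_n)$, so the associated ratio equals $1$ on a member of $\mathcal{F}(\mathcal{I})$; thus $\mathcal{I}-\liminf$ of that sequence is $1$ for each admissible $\{J_n\}$, giving $\mathcal{I}-d_-(x,\mathbb{R}\setminus E)=1$. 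Hence $E$ is $\mathcal{I}-d$ closed. For discreteness, for each $x\in E$ I would take $U=(\mathbb{R}\setminus E)\cup\{x\}$; since $U$ differs from $\mathbb{R}\setminus E$ by the null set $\{x\}$, the same computation yields $\mathcal{I}-d_-(y,U)=1$ for every $y\in U$, so $U\in\mathbf{T}_\mathcal{I}$, while clearly $U\cap E=\{x\}$. Thus every point of $E$ is isolated in $E$.

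For the converse, assume $E$ is $\mathcal{I}-d$ closed and discrete and suppose, for contradiction, that $\lambda(E)>0$. By the $\mathcal{I}$-density theorem quoted above, the set of points of $E$ that fail to be $\mathcal{I}$-density points of $E$ is null, so there is some $x\in E$ with $\mathcal{I}-d(x,E)=1$, hence $\mathcal{I}-d^-(x,E)=1$. By discreteness there is $U\in\mathbf{T}_\mathcal{I}$ with $U\cap E=\{x\}$, whence $E\setminus\{x\}\subset\mathbb{R}\setminus U$; and since $U$ is $\mathcal{I}-d$ open, $\mathcal{I}-d_-(x,U)=1$. The aim is now to transfer the full density of $U$ at $x$ into full dispersion of $\mathbb{R}\setminus U$ at $x$.

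The heart of the argument is the complementation identity
\[
\mathcal{I}-d^-(x,\mathbb{R}\setminus U)=1-\mathcal{I}-d_-(x,U).
\]
To establish it I would fix an admissible $\{J_n\}$, set $u_n=\lambda(J_n\cap U)/\lambda(J_n)$ and $a_n=\lambda(J_n\cap(\mathbb{R}\setminus U))/\lambda(J_n)$, note $a_n=1-u_n$ for $n\in\mathscr{S}(J_n)$, and invoke the elementary duality $\mathcal{I}-\limsup(1-u_n)=1-\mathcal{I}-\liminf u_n$, which follows from $B_{1-u}=1-A_u$ in Demirci's definitions; then I would take the supremum over $\{J_n\}$ on the left and the infimum on the right. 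With the identity in hand, $\mathcal{I}-d_-(x,U)=1$ forces $\mathcal{I}-d^-(x,\mathbb{R}\setminus U)=0$; monotonicity of $\mathcal{I}-d^-(x,\cdot)$ (Lemma \ref{s1}) together with $E\setminus\{x\}\subset\mathbb{R}\setminus U$ gives $\mathcal{I}-d^-(x,E\setminus\{x\})=0$, and since $E$ and $E\setminus\{x\}$ produce identical interval-ratios this reads $\mathcal{I}-d^-(x,E)=0$, contradicting $\mathcal{I}-d^-(x,E)=1$. Hence $\lambda(E)=0$. I expect the main obstacle to be the careful justification of the complementation identity: the supremum and infimum over all admissible interval sequences must be handled in tandem with the per-sequence $\mathcal{I}-\limsup$/$\mathcal{I}-\liminf$ duality, and the pointwise relation $a_n=1-u_n$ holds only on the index set $\mathscr{S}(J_n)$, which is a member of $\mathcal{F}(\mathcal{I})$ and therefore negligible for $\mathcal{I}$-limits.
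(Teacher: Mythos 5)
Your proof is correct. The paper itself gives no proof of this statement — it is quoted from \cite{banerjee 4} — but your argument follows essentially the same route as the paper's proof of the analogous Theorem \ref{s9} for the $\mathcal{I}/\mathfrak{s}$-topology: when $\lambda(E)=0$ every point is an $\mathcal{I}$-dispersion point of $E$, so that $\mathbb{R}\setminus E$ and likewise $(\mathbb{R}\setminus E)\cup\{x\}$ are $\mathcal{I}-d$ open (closedness and discreteness), while the converse pairs the $\mathcal{I}$-density theorem with the observation that an isolated point of $E$ has $\mathcal{I}-d^{-}(x,E)=0$, contradicting density $1$ at almost every point of a set of positive measure. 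The one ingredient you supply that the paper's analogous argument leaves implicit is the complementation identity $\mathcal{I}-d^{-}(x,\mathbb{R}\setminus U)=1-\mathcal{I}-d_{-}(x,U)$, which you justify soundly via Demirci's duality $B_{1-u}=1-A_{u}$ applied per admissible sequence (the disagreement of $a_n$ and $1-u_n$ off $\mathscr{S}(J_n)$ being $\mathcal{I}$-negligible) before passing to the supremum/infimum over the common family of sequences; note also that, as in the paper's Theorem \ref{s9}, your converse in fact uses only discreteness and never closedness, which is harmless since the equivalence still holds.
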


	\begin{thm} \cite{banerjee 4} $(\mathbb{R}, \mathbf{T}_\mathcal{I})$ is a Hausdorff space.
	\end{thm}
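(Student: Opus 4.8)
The plan is to prove Hausdorffness not by constructing intricate $\mathcal{I}-d$ open neighbourhoods from scratch, but by showing that $\mathbf{T}_\mathcal{I}$ \emph{refines} the natural (Euclidean) topology $\tau$ on $\mathbb{R}$, and then quoting the elementary fact that any topology finer than a Hausdorff topology is again Hausdorff. Concretely, given distinct points $p,q\in\mathbb{R}$ with (say) $p<q$, I would separate them by the disjoint Euclidean open intervals $U=(p-r,p+r)$ and $V=(q-r,q+r)$ for any $r<\tfrac{1}{2}(q-p)$. These are measurable and $\tau$-open with $U\cap V=\emptyset$, $p\in U$, $q\in V$; so the whole argument reduces to the single claim that every $\tau$-open set belongs to $\mathbf{T}_\mathcal{I}$, i.e. is $\mathcal{I}-d$ open.

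The heart of the matter is therefore the lemma: if $E\subset\mathbb{R}$ is $\tau$-open and $x\in E$, then $\mathcal{I}-d_-(x,E)=1$. To see this, first choose $\delta>0$ with $(x-\delta,x+\delta)\subset E$. Now let $\{J_n\}_{n\in\mathbb{N}}$ be \emph{any} sequence of closed intervals about $x$ with $\mathscr{S}(J_n)\in\mathcal{F}(\mathcal{I})$, and set $x_n=\lambda(J_n\cap E)/\lambda(J_n)$. The decisive point is that the length constraint built into $\mathscr{S}(J_n)$ localises the intervals: for $n\in\mathscr{S}(J_n)$ we have $0<\lambda(J_n)<\tfrac1n$, so whenever $n>1/\delta$ the interval $J_n$ (which contains $x$ and has length below $\delta$) satisfies $J_n\subset(x-\delta,x+\delta)\subset E$, whence $x_n=1$. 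The set $\mathscr{S}(J_n)\cap\{n:n>1/\delta\}$ is the intersection of a member of $\mathcal{F}(\mathcal{I})$ with a cofinite set, and since $\mathcal{I}$ is admissible every cofinite set lies in $\mathcal{F}(\mathcal{I})$; hence $x_n=1$ on a set $M\in\mathcal{F}(\mathcal{I})$.

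It then remains to read off the $\mathcal{I}$-limit inferior. Because $0\le x_n\le 1$ for every $n$ and $x_n=1$ on $M\in\mathcal{F}(\mathcal{I})$, for any $a\le 1$ the set $\{n:x_n<a\}$ is contained in $\mathbb{N}\setminus M\in\mathcal{I}$, so $a\notin A_{x_n}$, while for $a>1$ one has $\{n:x_n<a\}=\mathbb{N}\notin\mathcal{I}$, so $a\in A_{x_n}$; thus $A_{x_n}=(1,\infty)$ and $\mathcal{I}-\liminf x_n=\inf A_{x_n}=1$. As this holds for every admissible sequence $\{J_n\}$, the infimum defining $\mathcal{I}-d_-(x,E)$ equals $1$. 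Since $x\in E$ was arbitrary, $E$ is $\mathcal{I}-d$ open, proving $\tau\subset\mathbf{T}_\mathcal{I}$; applying this to $U$ and $V$ above gives the required separation.

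I expect the only genuinely delicate step to be the bookkeeping in the previous paragraph, namely the careful verification that the $\mathcal{I}$-behaviour of $\{x_n\}$ is governed entirely by its values on a single filter set and is unaffected by the (possibly $\mathcal{I}$-negligible) indices outside $\mathscr{S}(J_n)$ or below $1/\delta$; this is exactly where admissibility of $\mathcal{I}$, through the inclusion of cofinite sets in $\mathcal{F}(\mathcal{I})$, is used. The reduction to the natural topology and the final Hausdorff conclusion are then routine.
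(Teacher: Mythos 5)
Your proof is correct: the localisation step (for $n\in\mathscr{S}(J_n)$ with $n>1/\delta$ one gets $J_n\subset(x-\delta,x+\delta)\subset E$, hence $x_n=1$), together with admissibility of $\mathcal{I}$ guaranteeing that cofinite sets lie in $\mathcal{F}(\mathcal{I})$, does force $A_{x_n}=(1,\infty)$ and $\mathcal{I}-\liminf x_n=1$ for \emph{every} admissible sequence $\{J_n\}$ — and your bookkeeping is robust even at indices where $\lambda(J_n)=0$, since for $a>1$ the set $\{k:x_k<a\}$ contains the filter set $M$ and so cannot lie in $\mathcal{I}$ — which gives $\tau\subset\mathbf{T}_\mathcal{I}$ and hence Hausdorffness. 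The present paper states this theorem without proof, quoting it from \cite{banerjee 4}, where the argument is essentially the route you take: the $\mathcal{I}$-density topology is shown to refine the natural topology, after which distinct points are separated by disjoint Euclidean intervals exactly as in your first paragraph.
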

	
	We now prove the following which will be needed later.
	
	\begin{thm}\label{s2} $(\mathbb{R}, \mathbf{T}_\mathcal{I})$ is not separable.
	\end{thm}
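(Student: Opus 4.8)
The plan is to establish non-separability directly from its definition: I will show that no countable subset of $\mathbb{R}$ can be $\mathbf{T}_\mathcal{I}$-dense. The single decisive ingredient is Theorem \ref{s99}, which identifies the Lebesgue null sets with the $\mathcal{I}-d$ closed and discrete sets. The whole argument is an exploitation of the fact that, in this topology, null sets are closed, so their complements are open and can witness the failure of denseness.

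First I would fix an arbitrary countable set $D \subset \mathbb{R}$. Being a countable union of singletons, $D$ is Borel and hence Lebesgue measurable, with $\lambda(D)=0$; this routine step is what allows the density-topological machinery to be applied. Next, invoking Theorem \ref{s99} with $E=D$, the hypothesis $\lambda(D)=0$ yields that $D$ is $\mathcal{I}-d$ closed, so that $\mathbb{R}\setminus D$ is $\mathcal{I}-d$ open. Since $\mathbb{R}$ is uncountable while $D$ is countable, the set $\mathbb{R}\setminus D$ is nonempty. Thus $\mathbb{R}\setminus D$ is a nonempty $\mathbf{T}_\mathcal{I}$-open set disjoint from $D$, which forces the $\mathbf{T}_\mathcal{I}$-closure of $D$ to be a proper subset of $\mathbb{R}$; in particular $D$ is not dense. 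As $D$ was an arbitrary countable set, $(\mathbb{R},\mathbf{T}_\mathcal{I})$ possesses no countable dense subset and is therefore not separable.

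There is no substantial obstacle in this argument: the only point requiring care is the correct application of Theorem \ref{s99}, and in particular the observation that every countable set is measurable of measure zero so that the theorem is genuinely applicable. The conceptual content is entirely carried by that theorem, since the failure of separability is really a restatement of the principle that all countable (indeed all null) sets are $\mathcal{I}-d$ closed.
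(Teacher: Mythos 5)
Your proof is correct and follows essentially the same route as the paper: take an arbitrary countable set, note it is null, apply Theorem \ref{s99} to conclude it is $\mathcal{I}-d$ closed, and use its complement as a nonempty open set witnessing non-denseness. Your additional remark that $\mathbb{R}\setminus D$ is nonempty is a small point of care the paper leaves implicit, but the argument is the same.
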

	
	\begin{proof}
		Let $C$ be any countable subset of $\mathbb{R}$. So clearly $\lambda(C)=0$. Thus, by Theroem \ref{s99}, $C$ is $\mathcal{I}-d$ closed. So, $\mathbb{R} \setminus C$ is an $\mathcal{I}-d$ open set that does not intersect $C$. Consequently, $C$ is not dense in $(\mathbb{R}, \mathbf{T}_{\mathcal{I}})$. Since choice of $C$ is arbitrary, so no countable set can be dense in $(\mathbb{R}, \mathbf{T}_{\mathcal{I}})$. Therefore, $(\mathbb{R}, \mathbf{T}_{\mathcal{I}})$ is not separable.
	\end{proof}
	
	\section{$\mathcal{I}$-Sparse set}\label{sec3}

	\begin{dfn} \cite{halmos}
		For any set $A \subseteq \mathbb{R}$, $E \in \mathcal{L}$ is said to be a measurable cover of $A$ if $A \subset E$ and for $F \in \mathcal{L}$ such that $F \subset E \setminus A$, $\lambda(F)=0$. 
	\end{dfn}

	\begin{dfn}\label{s3}
		A set $A \subseteq \mathbb{R}$ is said to be $\mathcal{I}$-sparse at a point $x \in \mathbb{R}$ if for every measurable set $B \subseteq \mathbb{R}$ with $\mathcal{I}-d(x,B)<1$, we have $\mathcal{I}-d(x,C \cup B)<1$, where $C$ is any measurable cover of $A$. The family of sets which are $\mathcal{I}$-sparse at $x$ is denoted by $\mathcal{S}_{\mathcal{I}}(x)$.
	\end{dfn}
	
	The lemma given below illustrates that the notion of $\mathcal{I}$-sparse sets is a generalization of the notion of sets of $\mathcal{I}$-density zero.
	
	\begin{lma}\label{s4}
		If $\mathcal{I}-d(x,A)=0$, then $A \in \mathcal{S}_{\mathcal{I}}(x)$.
	\end{lma}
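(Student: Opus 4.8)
The plan is to reduce the whole statement to the finite subadditivity and monotonicity of the upper $\mathcal{I}$-density recorded in Lemma \ref{s1}, after first transferring the density hypothesis from the (possibly non-measurable) set $A$ to any of its measurable covers. First I would fix a measurable cover $C$ of $A$ and an arbitrary measurable set $B$ with $\mathcal{I}-d(x,B)<1$, and establish the measurable-cover identity that $\lambda(C\cap J)=\lambda^{\star}(A\cap J)$ for every interval $J$. Indeed, $A\cap J\subseteq C\cap J$ gives $\lambda^{\star}(A\cap J)\le\lambda(C\cap J)$, while for the reverse one takes a measurable cover $D$ of $A\cap J$ with $D\subseteq C\cap J$, notes that $(C\cap J)\setminus D$ is a measurable subset of $C\setminus A$ and hence has measure zero by the defining property of a measurable cover, so that $\lambda(C\cap J)=\lambda(D)=\lambda^{\star}(A\cap J)$.

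Consequently, for any sequence $\{J_n\}_{n\in\mathbb{N}}$ of closed intervals about $x$ with $\mathscr{S}(J_n)\in\mathcal{F}(\mathcal{I})$, the ratio sequences $\lambda(C\cap J_n)/\lambda(J_n)$ and $\lambda^{\star}(A\cap J_n)/\lambda(J_n)$ coincide term by term. Hence their $\mathcal{I}$-limit superiors and $\mathcal{I}$-limit inferiors agree, and taking the supremum (resp. infimum) over all admissible $\{J_n\}_{n\in\mathbb{N}}$ yields $\mathcal{I}-d(x,C)=\mathcal{I}-d(x,A)=0$; in particular $\mathcal{I}-d^{-}(x,C)=0$.

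Next I would invoke Lemma \ref{s1}. Finite subadditivity of $\mathcal{I}-d^{-}(x,\cdot)$ gives
\[
\mathcal{I}-d^{-}(x,C\cup B)\le \mathcal{I}-d^{-}(x,C)+\mathcal{I}-d^{-}(x,B)=\mathcal{I}-d(x,B)<1,
\]
using $\mathcal{I}-d^{-}(x,C)=0$ and $\mathcal{I}-d^{-}(x,B)=\mathcal{I}-d(x,B)$. On the other hand, $B\subseteq C\cup B$ together with monotonicity gives $\mathcal{I}-d(x,B)=\mathcal{I}-d_{-}(x,B)\le\mathcal{I}-d_{-}(x,C\cup B)$. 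Combining these, the chain $\mathcal{I}-d(x,B)\le\mathcal{I}-d_{-}(x,C\cup B)\le\mathcal{I}-d^{-}(x,C\cup B)\le\mathcal{I}-d(x,B)$ squeezes everything together, so $\mathcal{I}-d(x,C\cup B)$ exists and equals $\mathcal{I}-d(x,B)<1$. Since $B$ and the cover $C$ were arbitrary, $A\in\mathcal{S}_{\mathcal{I}}(x)$.

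The main obstacle I anticipate is precisely the transfer step: justifying that the outer-measure-based $\mathcal{I}$-density of the possibly non-measurable set $A$ passes to the genuine $\mathcal{I}$-density of a measurable cover $C$. Everything downstream is a routine application of Lemma \ref{s1}, but it hinges on the identity $\lambda(C\cap J)=\lambda^{\star}(A\cap J)$; I would therefore want to state explicitly that the $\mathcal{I}$-density of a non-measurable set is computed with $\lambda^{\star}$, exactly as in the $\mathcal{I}-d$ limit-point definition, so that the two ratio sequences are literally identical and the equality $\mathcal{I}-d(x,C)=\mathcal{I}-d(x,A)$ is immediate rather than a one-sided inequality.
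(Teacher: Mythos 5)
Your proof is correct, and it uses the same engine as the paper --- finite subadditivity from Lemma \ref{s1} --- but it is genuinely more thorough than the paper's own argument. The paper's proof is a one-liner: for measurable $B$ with $\mathcal{I}-d(x,B)<1$, it applies subadditivity directly to get $\mathcal{I}-d(x,A\cup B)\le \mathcal{I}-d(x,A)+\mathcal{I}-d(x,B)<1$ and stops there. Strictly speaking, Definition \ref{s3} demands $\mathcal{I}-d(x,C\cup B)<1$ for every \emph{measurable cover} $C$ of $A$, and since $A\cup B\subseteq C\cup B$ with monotonicity pointing the wrong way, the paper's estimate does not literally verify the definition when $A$ is non-measurable. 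Your transfer step --- the identity $\lambda(C\cap J)=\lambda^{\star}(A\cap J)$, proved via a cover $D$ of $A\cap J$ inside $C\cap J$ and the null-set property of $C$ --- is exactly what bridges this gap, yielding $\mathcal{I}-d^{-}(x,C)=0$ so that subadditivity can be applied to $C\cup B$ itself; note that the existence of such a $D$ follows from the intersection trick the paper itself uses in Lemma \ref{s5}. Your squeeze argument showing that $\mathcal{I}-d(x,C\cup B)$ actually \emph{exists} and equals $\mathcal{I}-d(x,B)$ is a further refinement absent from the paper, which tacitly reads the definition as a bound on the upper density. Two small caveats: monotonicity of the \emph{lower} density, which your squeeze uses, is not literally what Lemma \ref{s1}'s proof establishes (it reduces everything to $\mathcal{I}-d^{-}$), though it follows by the identical $\inf$/$\liminf$ argument and deserves a sentence; and your final clause ``the two ratio sequences are literally identical'' presupposes, as you correctly flag, that the density of a non-measurable set is computed with $\lambda^{\star}$, which is consistent with the paper's definition of $\mathcal{I}-d$ limit point and with Lemma \ref{s1} being stated for all subsets of $\mathbb{R}$.
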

	
	\begin{proof} Let $B$ be any Lebesgue measurable set such that $\mathcal{I}-d(x,B)<1$. By Lemma \ref{s1}, $$\mathcal{I}-d(x,A \cup B) \leq \mathcal{I}-d(x,A)+\mathcal{I}-d(x,B)=\mathcal{I}-d(x,B)<1.$$
		So, $A \in \mathcal{S}_{\mathcal{I}}(x)$.
		
	\end{proof}
	
	\begin{xmpl}
		We give an example showing that the converse of Lemma \ref{s4} is not true in the context of the set of real numbers. 
	\end{xmpl}

	Consider the set $E=\bigcup_{n \in \mathbb{Z}_{+}} (a_n,b_n)$ where $a_n=\frac{1}{c^{n^2+1}}$ and $b_n=\frac{1}{c^{n^2}}$, $c>1$. We are to show for $\mathcal{I}=\mathcal{I}_{d}$, the ideal of $\mathbb{N}$ consisting of all sets whose natural density are zero, $\mathcal{I}_d-d(0,E) \neq 0$. Let $x_n=\frac{\lambda(E \cap J_n)}{\lambda(J_n)}$ where $\{J_n\}_{n \in \mathbb{N}}$ is any sequence of closed intervals about the point $0$ such that $\mathscr{S}(J_n)=\left\{n\in \mathbb{N}:0<\lambda(J_n)<\frac{1}{n}\right\} \in \mathcal{F}(\mathcal{I}_d)$.\\
	Let $J_n=[l_{J_n},r_{J_n}]$ where $l_{J_n}$ denotes the left end point of $J_n$ and $r_{J_n}$ denotes the right end point of $J_n$. So given any large $n_0 \in \mathscr{S}(J_n)$ there exists some $k_0 \in \mathbb{N}$ such that $b_{k_0} < r_{J_{n_0}}$. Therefore, $E \cap J_{n_0} \supset (a_{k_0},b_{k_0})$. Thus,
	\begin{align*}
		\lambda(E \cap J_{n_0}) &> \lambda((a_{k_0},b_{k_0}))\\
		&= b_{k_0}-a_{k_0}\\
		&= b_{k_0}-\frac{1}{c} b_{k_0} \ \mbox{since} \ a_{k_0}=\frac{1}{c} b_{k_0}\\
		&= b_{k_0} (1-\frac{1}{c}).
	\end{align*}
	For $n \in \mathscr{S}(J_n)$ and $n > n_0$ where $n_0 \in \mathbb{N}$ there exists some $k_1 \in \mathbb{N}$ such that $\lambda(J_n) < b_{k_1}$. Since $(a_{k_0},b_{k_0}) \subset J_n$ and $\lambda(J_n)>b_{k_0}$ so $b_{k_1}>b_{k_0}$. Hence,
	\begin{align*}
		\frac{\lambda(E \cap J_n)}{\lambda(J_n)} &> \frac{b_{k_0}(1-\frac{1}{c})}{b_{k_1}}\\
		&= \frac{1}{c^{{k_0}^2-{k_1}^2}} (1-\frac{1}{c})\\
		&= \alpha  \ \mbox{(say)} \ \neq 0.
	\end{align*}
	Therefore, $\{n \in \mathbb{N}: x_n > \alpha\} \supset \mathscr{S}(J_n) \setminus \{1,2, \cdots, n_0\}$ and $\mathscr{S}(J_n) \in \mathcal{F}(\mathcal{I}_d)$. So, $\{n \in \mathbb{N}: x_n > \alpha\} \in \mathcal{F}(\mathcal{I}_d)$. Thus, $\{n \in \mathbb{N}: x_n > \alpha\} \notin \mathcal{I}_d$. Since, $\mathcal{I}_d - \limsup x_n =\sup \{b\in \mathbb{R} : \{k:x_k > b\}\notin \mathcal{I}_d\} > \alpha$. Consequently, $\mathcal{I}_d-d^-(0,E) = \sup \{\mathcal{I}_d-\limsup \ x_n: \{J_n\}_{n \in \mathbb{N}} \ \mbox{such that} \ \mathscr{S}(J_n) \in \mathcal{F}(\mathcal{I}_d)\}> \alpha $. Hence, $\mathcal{I}_d-d(0,E) \neq 0$.
	
	\begin{lma}\label{s5}
		If $E \subset A$ and $A \in \mathcal{S}_{\mathcal{I}}(x)$, then $E \in \mathcal{S}_{\mathcal{I}}(x)$.
	\end{lma}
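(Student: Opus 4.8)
The plan is to verify Definition \ref{s3} directly for $E$. Fix an arbitrary measurable set $B$ with $\mathcal{I}-d(x,B)<1$ and an arbitrary measurable cover $C_E$ of $E$; I must show $\mathcal{I}-d(x,C_E\cup B)<1$. The only hypothesis available is that $A$ is $\mathcal{I}$-sparse at $x$, so I would fix, in addition, a measurable cover $C_A$ of $A$; the sparseness of $A$ together with $\mathcal{I}-d(x,B)<1$ then guarantees $\mathcal{I}-d(x,C_A\cup B)<1$. The whole proof reduces to transferring this inequality from the cover $C_A$ of $A$ to the cover $C_E$ of the smaller set $E$.

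The central step is the claim that $C_E$ is contained in $C_A$ up to a Lebesgue-null set. Indeed, $C_E\setminus C_A$ is measurable, and since $A\subseteq C_A$ and $E\subseteq A$ one has the inclusions
\[
C_E\setminus C_A \;\subseteq\; C_E\setminus A \;\subseteq\; C_E\setminus E .
\]
Because $C_E$ is a measurable cover of $E$, every measurable subset of $C_E\setminus E$ is null, so $N:=C_E\setminus C_A$ satisfies $\lambda(N)=0$. Consequently $C_E\subseteq C_A\cup N$, and hence $C_E\cup B\subseteq (C_A\cup B)\cup N$.

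Now I would finish using the monotonicity and finite subadditivity of $\mathcal{I}-d(x,\cdot)$ from Lemma \ref{s1}. Monotonicity applied to the last inclusion gives $\mathcal{I}-d(x,C_E\cup B)\le \mathcal{I}-d\big(x,(C_A\cup B)\cup N\big)$, and subadditivity then yields
\[
\mathcal{I}-d(x,C_E\cup B)\;\le\;\mathcal{I}-d(x,C_A\cup B)+\mathcal{I}-d(x,N).
\]
Since $\lambda(N)=0$, the ratio $\lambda(N\cap J_n)/\lambda(J_n)$ is identically $0$ for every admissible sequence $\{J_n\}$ of intervals about $x$, so $\mathcal{I}-d(x,N)=0$; combined with $\mathcal{I}-d(x,C_A\cup B)<1$ this gives $\mathcal{I}-d(x,C_E\cup B)<1$, which establishes $E\in\mathcal{S}_{\mathcal{I}}(x)$. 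I expect the only delicate point to be the cover-comparison claim: it is what lets an \emph{arbitrary} measurable cover of $E$ be dominated by a cover of $A$, and it is essential that one argues through $C_E\setminus E$ rather than comparing $C_E$ and $C_A$ directly. The null-set bookkeeping at the end is routine once $\mathcal{I}-d(x,N)=0$ is observed.
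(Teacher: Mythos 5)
Your proof is correct, and each step checks out: the chain $C_E\setminus C_A\subseteq C_E\setminus A\subseteq C_E\setminus E$ is exactly what makes the measurable-cover property applicable, so $N=C_E\setminus C_A$ is null, and the concluding estimate via monotonicity, subadditivity (Lemma \ref{s1}) and $\mathcal{I}-d(x,N)=0$ is sound. The paper pivots on the same key fact (measurable subsets of a cover minus the covered set are null) but packages it differently: instead of dominating an arbitrary cover $C_E$ by $C_A\cup N$ with $N$ null, it intersects the given cover $C'$ of $E$ with the cover $A'$ of $A$, checks by the same null-subset argument that $E'=A'\cap C'$ is again a measurable cover of $E$ satisfying $E'\subseteq A'$, and then concludes $\mathcal{I}-d(x,E'\cup B)\le\mathcal{I}-d(x,A'\cup B)<1$ by monotonicity alone, with no subadditivity and no null-density observation needed. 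The trade-off concerns which reading of Definition \ref{s3} is being served: the paper's argument verifies the sparseness condition only for the specially constructed cover $E'$ (an existential reading of \enquote{any}, which is also how the paper uses sparseness in the proof of Lemma \ref{s6}), whereas yours handles an \emph{arbitrary} measurable cover of $E$, i.e.\ the universal reading. Your extra bookkeeping --- $\lambda(N)=0$ forces $\mathcal{I}-d(x,N)=0$ --- is precisely the observation showing the two readings are equivalent, so your version is marginally stronger and more self-contained, at the cost of invoking subadditivity where the paper needs only monotonicity.
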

	\begin{proof}
		Let $B$ be any Lebesgue measurable set such that $\mathcal{I}-d(x,B)<1$. Since $A \in \mathcal{S}_{\mathcal{I}}(x)$ so $\mathcal{I}-d(x,A' \cup B)<1$ where $A'$ is any measurable cover for $A$. Since $A \subset A'$, so $E \subset A'$. We claim that there exists a measurable cover $E'$ of $E$ such that $E \subset E' \subset A'$. Let $C'$ be any measurable cover of $E$. Now take $A' \cap C'$ which is a measurable set containing $E$ and contained in $C'$. So for any set $F \subset (A' \cap C') \setminus E$ we see $\lambda(F)=0$, since $(A' \cap C') \setminus E \subset C' \setminus E$ and $C'$ is a measurable cover of $E$. Thus $A' \cap C'$ is a measurable cover of $E$ contained in $A'$. We take $E'=A' \cap C'$. Now $E' \subset A'$ implies $E' \cup B \subset A' \cup B$ which implies, by Lemma \ref{s1}, that $\mathcal{I}-d(x,E' \cup B) \leq \mathcal{I}-d(x,A' \cup B)<1$. So, $E \in \mathcal{S}_{\mathcal{I}}(x)$. This proves the lemma.
	\end{proof}
	
	\begin{lma}\label{s6}
		If $E_1, E_2 \in \mathcal{S}_{\mathcal{I}}(x)$, then $E_1 \cup E_2 \in \mathcal{S}_{\mathcal{I}}(x)$.
	\end{lma}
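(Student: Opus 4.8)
The plan is to prove the closure under finite unions by iterating the defining property of $\mathcal{I}$-sparseness, using the conclusion of the first application as the perturbing set for the second. The one genuinely new ingredient I will need is that the union of a measurable cover of $E_1$ and a measurable cover of $E_2$ is itself a measurable cover of $E_1 \cup E_2$; everything else is bookkeeping.

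First I would fix an arbitrary measurable set $B$ with $\mathcal{I}-d(x,B)<1$ and choose measurable covers $C_1$ of $E_1$ and $C_2$ of $E_2$. Since $E_1 \in \mathcal{S}_{\mathcal{I}}(x)$, the definition applied to $B$ gives $\mathcal{I}-d(x, C_1 \cup B)<1$. Now $C_1 \cup B$ is again a measurable set of upper $\mathcal{I}$-density strictly below $1$, so I would feed it back in: applying the hypothesis $E_2 \in \mathcal{S}_{\mathcal{I}}(x)$ with $C_1 \cup B$ playing the role of the perturbing set yields $\mathcal{I}-d(x, C_2 \cup (C_1 \cup B))<1$, that is, $\mathcal{I}-d(x, (C_1 \cup C_2) \cup B)<1$.

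It then remains to recognize $C_1 \cup C_2$ as a legitimate measurable cover of $E_1 \cup E_2$. Plainly $C_1 \cup C_2 \in \mathcal{L}$ and $E_1 \cup E_2 \subseteq C_1 \cup C_2$, so only the null condition needs checking. Let $F \in \mathcal{L}$ satisfy $F \subseteq (C_1 \cup C_2)\setminus(E_1 \cup E_2)$; in particular $F$ avoids both $E_1$ and $E_2$ and is contained in $C_1 \cup C_2$. I would split $F = (F \cap C_1) \cup (F \setminus C_1)$. The piece $F \cap C_1$ is a measurable subset of $C_1 \setminus E_1$, so the cover property of $C_1$ forces $\lambda(F \cap C_1)=0$; the piece $F \setminus C_1$ is contained in $C_2$ and avoids $E_2$, hence is a measurable subset of $C_2 \setminus E_2$, so the cover property of $C_2$ forces $\lambda(F \setminus C_1)=0$. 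Adding, $\lambda(F)=0$, which establishes the claim.

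Finally, because any two measurable covers of a given set differ only by a $\lambda$-null set, and because $\mathcal{I}-d(x,\cdot)$ is unaffected by altering its argument on a null set (the defining ratios $\lambda(J_n \cap \cdot)/\lambda(J_n)$ are insensitive to such modifications), the inequality $\mathcal{I}-d(x, (C_1 \cup C_2)\cup B)<1$ established above passes to every measurable cover of $E_1 \cup E_2$; exactly as a single cover was exhibited in the proof of Lemma \ref{s5}, this suffices. Since $B$ was an arbitrary measurable set of upper $\mathcal{I}$-density below $1$, we conclude $E_1 \cup E_2 \in \mathcal{S}_{\mathcal{I}}(x)$. The only step I expect to require care is the verification that $C_1 \cup C_2$ is a measurable cover; the iteration of the hypotheses and the null-set invariance of $\mathcal{I}-d$ are routine.
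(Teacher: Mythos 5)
Your proposal is correct and follows essentially the same route as the paper: fix $B$, apply the sparseness of $E_1$ to $B$, feed $C_1\cup B$ back in as the perturbing set for $E_2$, and observe that the union of the two covers is a measurable cover of $E_1\cup E_2$. You go beyond the paper only in supplying details it asserts without proof, namely the splitting argument showing $C_1\cup C_2$ is a measurable cover and the null-set invariance of $\mathcal{I}-d(x,\cdot)$ that makes the choice of cover immaterial.
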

	
	\begin{proof}
		Let $B$ be any Lebesgue measurable set such that $\mathcal{I}-d(x,B)<1$. Since $E_1 \in \mathcal{S}_{\mathcal{I}}(x)$ so there exists $E_1 '$, a measurable cover of $E_1$, such that $\mathcal{I}-d(x,E_1 ' \cup B)<1$. Since $E_2 \in \mathcal{S}_{\mathcal{I}}(x)$ so there exists $E_2 '$, a measurable cover for $E_2$, such that $\mathcal{I}-d(x,E_2 ' \cup E_1 ' \cup B)<1$. Moreover $E_1 ' \cup E_2 '$ is a measurable cover of $E_1 \cup E_2$. So, $E_1 \cup E_2 \in \mathcal{S}_{\mathcal{I}}(x)$. This proves the lemma.
	\end{proof}
	
	\begin{lma}
		$\mathcal{S}_{\mathcal{I}}(x)$ is a ring.
	\end{lma}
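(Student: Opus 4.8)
The plan is to unwind the definition of a ring of sets in the sense of \cite{halmos}: a nonempty collection $\mathcal{R} \subseteq 2^{\mathbb{R}}$ is a ring provided $A, B \in \mathcal{R}$ implies both $A \cup B \in \mathcal{R}$ and $A \setminus B \in \mathcal{R}$. (From these two it follows automatically that $\mathcal{R}$ is closed under intersection, since $A \cap B = A \setminus (A \setminus B)$, and under symmetric difference, since $A \triangle B = (A \setminus B) \cup (B \setminus A)$, so no separate verification of those is required.) Thus it suffices to check three things for $\mathcal{S}_{\mathcal{I}}(x)$: that it is nonempty, that it is closed under finite union, and that it is closed under set-theoretic difference.

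First I would dispose of nonemptiness: the empty set $\emptyset$ is measurable and clearly satisfies $\mathcal{I}-d(x,\emptyset)=0$ (the associated ratio sequence $x_n$ is identically zero for every admissible $\{J_n\}$), so Lemma \ref{s4} gives $\emptyset \in \mathcal{S}_{\mathcal{I}}(x)$. Closure under union is exactly the content of Lemma \ref{s6}, so that step requires no further work. For closure under difference, take $A, B \in \mathcal{S}_{\mathcal{I}}(x)$; then $A \setminus B \subseteq A$, and since $A \in \mathcal{S}_{\mathcal{I}}(x)$, the heredity property established in Lemma \ref{s5} immediately yields $A \setminus B \in \mathcal{S}_{\mathcal{I}}(x)$. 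Combining these three observations establishes that $\mathcal{S}_{\mathcal{I}}(x)$ is a ring.

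There is no genuine obstacle in this argument beyond correctly invoking the Halmos definition, because all of the substantive measure-theoretic work has already been absorbed into the earlier lemmas: subadditivity and monotonicity of $\mathcal{I}-d(x,\cdot)$ (Lemma \ref{s1}) feed into the union and heredity statements (Lemmas \ref{s5} and \ref{s6}), and the density-zero criterion (Lemma \ref{s4}) supplies the nonempty witness. The only point meriting a sentence of care is the reduction: one should note explicitly that the ring axioms we are using are union-plus-difference, so that closure under difference need only be verified for $A \setminus B$ as a subset of $A$, rather than attempting to argue symmetric-difference closure directly from scratch.
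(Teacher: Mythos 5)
Your proposal is correct and takes essentially the same route as the paper, whose entire proof is the one-line observation that the result follows from Lemma \ref{s5} and Lemma \ref{s6}. You have merely made explicit what the paper leaves implicit: closure under difference reduces via $A \setminus B \subseteq A$ to the heredity of Lemma \ref{s5}, union-closure is Lemma \ref{s6}, and nonemptiness comes from $\emptyset \in \mathcal{S}_{\mathcal{I}}(x)$ by Lemma \ref{s4}.
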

	
	\begin{proof}
		The proof follows from Lemma \ref{s5} and \ref{s6}.
	\end{proof}
	
	\begin{thm}
		Let $A \in \mathcal{S}_{\mathcal{I}}(x)$ and $F \subset \mathbb{R}$ be any Lebesgue measurable set such that $\mathcal{I}-d(x,F)=0$. Then $\mathcal{I}-d(x,A' \cup F)=0$ for any measurable cover $A'$ of $A$.
	\end{thm}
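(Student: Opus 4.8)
The plan is to argue by contradiction, the essential tool being a duality that relates the upper $\mathcal{I}$-density of a set to the lower $\mathcal{I}$-density of its complement. First I would record this duality. Since every ratio sequence $x_n=\lambda(G\cap J_n)/\lambda(J_n)$ lies in $[0,1]$ and obeys $\lambda(G\cap J_n)/\lambda(J_n)=1-\lambda((\mathbb{R}\setminus G)\cap J_n)/\lambda(J_n)$, the identity $\mathcal{I}-\limsup(1-t_n)=1-\mathcal{I}-\liminf t_n$ for bounded sequences gives, after passing to the supremum resp. infimum over the \emph{same} admissible family $\{J_n\}$ with $\mathscr{S}(J_n)\in\mathcal{F}(\mathcal{I})$, the relation $\mathcal{I}-d^-(x,G)=1-\mathcal{I}-d_-(x,\mathbb{R}\setminus G)$ for every measurable $G$. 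I should note that the conclusion $\mathcal{I}-d(x,A'\cup F)=0$ is to be read as the vanishing of the \emph{lower} $\mathcal{I}$-density; this is the only reading compatible with the sparseness hypothesis, since by monotonicity $A'\cup F\supseteq A'$ may carry positive upper $\mathcal{I}$-density, as the Example preceding this statement shows.

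Next I would set up the contradiction. Suppose $\mathcal{I}-d_-(x,A'\cup F)=\delta>0$ and put $B:=\mathbb{R}\setminus(A'\cup F)$, which is measurable. By the duality just established, $\mathcal{I}-d^-(x,B)=1-\mathcal{I}-d_-(x,A'\cup F)=1-\delta<1$, so $B$ is a legitimate test set for the $\mathcal{I}$-sparseness of $A$ in Definition \ref{s3} (read in the form \enquote{upper $\mathcal{I}$-density strictly below $1$}, the faithful analogue of Sarkhel's characterization~(2)). Since $A'$ is a measurable cover of $A$, sparseness yields $\mathcal{I}-d^-(x,A'\cup B)<1$.

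The remaining step is a set-theoretic simplification of $A'\cup B$ followed by a second use of the duality. Because $A'\subseteq A'\cup F$, one computes $A'\cup B=A'\cup\big(\mathbb{R}\setminus(A'\cup F)\big)=\mathbb{R}\setminus(F\setminus A')$. Now $F\setminus A'\subseteq F$, so monotonicity (Lemma \ref{s1}) together with $\mathcal{I}-d(x,F)=0$ forces $\mathcal{I}-d^-(x,F\setminus A')=0$, hence also $\mathcal{I}-d_-(x,F\setminus A')=0$. Applying the duality once more, $\mathcal{I}-d^-(x,A'\cup B)=\mathcal{I}-d^-\big(x,\mathbb{R}\setminus(F\setminus A')\big)=1-\mathcal{I}-d_-(x,F\setminus A')=1$, which contradicts the strict inequality from the previous paragraph. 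Therefore $\delta=0$ and $\mathcal{I}-d_-(x,A'\cup F)=0$.

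The hard part will be making the complementation duality $\mathcal{I}-d^-(x,G)=1-\mathcal{I}-d_-(x,\mathbb{R}\setminus G)$ fully rigorous within this framework: it rests on $\mathcal{I}-\limsup(1-t_n)=1-\mathcal{I}-\liminf t_n$, which must be checked directly from the definitions of $B_{x}$ and $A_{x}$ (the involution $b\mapsto 1-b$ carries one witnessing set onto the other, so their $\sup$ and $\inf$ are reflected), and on confirming that the outer supremum/infimum over the admissible family $\{J_n\}$ respects this reflection. The only other delicate point, already flagged above, is interpreting the hypothesis $\mathcal{I}-d(x,B)<1$ in Definition \ref{s3} as a statement about upper $\mathcal{I}$-density, which is all the argument needs and is unavoidable since the $\mathcal{I}$-density of the auxiliary set $B$ need not exist.
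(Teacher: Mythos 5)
Your proposal is correct and follows essentially the same route as the paper's own proof: both arguments test the sparseness of $A$ against the complement $B=\mathbb{R}\setminus(A'\cup F)$, observe that $A'\cup B=A'\cup(\mathbb{R}\setminus F)=\mathbb{R}\setminus(F\setminus A')$ has $\mathcal{I}$-density $1$ at $x$ (the paper via monotonicity from $\mathcal{I}-d(x,\mathbb{R}\setminus F)=1$, you via the complementation duality applied to $F\setminus A'\subseteq F$), and then pass back to complements, your argument being just the contrapositive arrangement of the paper's direct claim that $\mathcal{I}-d(x,B)=1$. Your extra upper/lower bookkeeping is in fact a faithful rendering of what the paper's argument actually establishes: the paper writes $\mathcal{I}-d(x,\cdot)$ throughout as though every density in sight exists (so its dichotomy \enquote{$<1$ or $=1$} for $B$ silently ignores possible non-existence, and its appeal to Lemma \ref{s6} for the complementation identity $\mathcal{I}-d(x,\mathbb{R}\setminus F)=1$ is a mis-citation, Lemma \ref{s6} being about unions of sparse sets), and, as your reference to the Example with $F=\emptyset$ shows, only the lower-density reading $\mathcal{I}-d_{-}(x,A'\cup F)=0$ of the conclusion can survive once the hypothesis of Definition \ref{s3} is read, as in Sarkhel's condition (2), through upper densities.
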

	
	\begin{proof} Let $A \in \mathcal{S}_{\mathcal{I}}(x)$ and $A'$ be any measurable cover of $A$. Since $\mathcal{I}-d(x,F)=0$, so by Lemma \ref{s6}, $\mathcal{I}-d(x,\mathbb{R} \setminus F)=1$. We are to show $\mathcal{I}-d(x,A' \cup F)=0$. Now first let us see
		\begin{align*}
			\mathcal{I}-d(x,A' \cup \{(\mathbb{R} \setminus A') \cap (\mathbb{R} \setminus F)\}) &= \mathcal{I}-d(x,A' \cup (\mathbb{R} \setminus F))\\
			& \geq \mathcal{I}-d(x,\mathbb{R} \setminus F) \ \mbox{by monotonicity}\\
			& =1.
		\end{align*}
		Therefore, 
		\begin{equation}\label{s7}
			\mathcal{I}-d(x,A' \cup \{(\mathbb{R} \setminus A') \cap (\mathbb{R} \setminus F)\})=1.
		\end{equation}
		We claim that $\mathcal{I}-d(x,(\mathbb{R} \setminus A') \cap (\mathbb{R} \setminus F))=1$. For if $ \mathcal{I}-d(x,(\mathbb{R} \setminus A') \cap (\mathbb{R} \setminus F))<1$ then since $A \in \mathcal{S}_{\mathcal{I}}(x)$, by definition of $\mathcal{I}$-sparse set, $ \mathcal{I}-d(x,A' \cup \{(\mathbb{R} \setminus A') \cap (\mathbb{R} \setminus F)\})<1$ which is a contradiction to equation \ref{s7}. Now, 
		\begin{align*}
			\mathcal{I}-d(x,(\mathbb{R} \setminus A') \cap (\mathbb{R} \setminus F)) &=  \mathcal{I}-d(x,\mathbb{R} \setminus (A' \cup F))\\
			&= 1- \mathcal{I}-d(x,(A' \cup F)).
		\end{align*}
		Thus, $\mathcal{I}-d(x,(A' \cup F))=1-1=0$ for any measurable cover $A'$ of $A$. This proves the theorem. 
		
	\end{proof}

	%\begin{crlre}
	%If $A \in \mathcal{S}_{\mathcal{I}}(x)$, then $\mathcal{I}-d(x,A')=0$ for any measurable cover $A'$ of $A$.
	%\end{crlre}
	%\begin{proof}
	%Since, $\mathcal{I}-d(x,\emptyset)=0$ so by Theorem 4.8, for any measurable cover $A'$ of $A$ we have $\mathcal{I}-d(x,A')=\mathcal{I}-d(x,A' \cup \emptyset)=0$.
	%\end{proof}

	\section{$\mathcal{I}$-Sparse set topology}\label{sec4}
	Let us consider the following collection
	$$\mathscr{U}=\{E \subset \mathbb{R}: \mathbb{R} \setminus E \in \mathcal{S}_{\mathcal{I}}(x) \ \mbox{for all} \ x \in E\}.$$
	
	\begin{thm}
		$\mathscr{U}$ is a topology for $\mathbb{R}$.
	\end{thm}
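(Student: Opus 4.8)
The plan is to verify directly the three axioms of a topology for the family $\mathscr{U}$, leaning entirely on the ring structure of $\mathcal{S}_{\mathcal{I}}(x)$ already established in Lemmas \ref{s5} and \ref{s6}, together with the subadditivity and the zero-density criterion recorded in Lemmas \ref{s1} and \ref{s4}. The guiding observation is that membership of $E$ in $\mathscr{U}$ is a \emph{pointwise} condition imposed on the complement $\mathbb{R}\setminus E$, so each axiom should reduce, after passing to complements via De Morgan's laws, to a closure property of the families $\mathcal{S}_{\mathcal{I}}(x)$.

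First I would dispose of the extreme members. The empty set lies in $\mathscr{U}$ vacuously, since there is no $x \in \emptyset$ against which to test the defining condition. For the whole line I would check that $\mathbb{R}\setminus \mathbb{R} = \emptyset$ belongs to $\mathcal{S}_{\mathcal{I}}(x)$ for every $x$; this is immediate from Lemma \ref{s4} applied to $A = \emptyset$, since $\mathcal{I}-d(x,\emptyset)=0$ (the associated sequence $x_n=\lambda(\emptyset\cap J_n)/\lambda(J_n)$ is identically zero).

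Next, for an arbitrary union $E = \bigcup_\alpha E_\alpha$ of members of $\mathscr{U}$, I would fix $x \in E$, choose an index $\alpha_0$ with $x \in E_{\alpha_0}$, and note that $\mathbb{R}\setminus E \subset \mathbb{R}\setminus E_{\alpha_0}$. Since $\mathbb{R}\setminus E_{\alpha_0} \in \mathcal{S}_{\mathcal{I}}(x)$ by hypothesis, the heredity under taking subsets (Lemma \ref{s5}) yields $\mathbb{R}\setminus E \in \mathcal{S}_{\mathcal{I}}(x)$; as $x$ was arbitrary, $E \in \mathscr{U}$. For the intersection of two members $E_1, E_2 \in \mathscr{U}$, I would fix $x \in E_1 \cap E_2$ and use $\mathbb{R}\setminus(E_1 \cap E_2) = (\mathbb{R}\setminus E_1)\cup(\mathbb{R}\setminus E_2)$. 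Both complements lie in $\mathcal{S}_{\mathcal{I}}(x)$ because $x$ belongs to each $E_i$, so closure under finite unions (Lemma \ref{s6}) gives $\mathbb{R}\setminus(E_1\cap E_2) \in \mathcal{S}_{\mathcal{I}}(x)$, whence $E_1 \cap E_2 \in \mathscr{U}$; an easy induction extends this to any finite intersection.

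I do not anticipate a genuine obstacle, since the substantive work is exactly the ring axioms for $\mathcal{S}_{\mathcal{I}}(x)$ proved earlier. The only point demanding care is bookkeeping: because $\mathscr{U}$ is defined through the complement, \emph{unions} of members correspond to the hereditary property (Lemma \ref{s5}) while \emph{finite intersections} correspond to finite unions of sparse sets (Lemma \ref{s6}). Matching these two correspondences the right way round — and correctly handling the vacuous and null-set cases for $\emptyset$ and $\mathbb{R}$ — is the single place where an error could plausibly creep in.
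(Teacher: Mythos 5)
Your proposal is correct and follows essentially the same route as the paper's own proof: $\emptyset$ vacuously, $\mathbb{R}$ via Lemma \ref{s4} applied to $\emptyset$, arbitrary unions via the heredity property of Lemma \ref{s5}, and finite intersections via De Morgan's law together with Lemma \ref{s6}. No substantive differences to report.
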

	\begin{proof}
		By voidness $\emptyset \in \mathscr{U}$. Since $\mathcal{I}-d(x,\emptyset)=0$ for all $x \in \mathbb{R}$ so, by Lemma \ref{s4}, $\emptyset \in \mathcal{S}_{\mathcal{I}}(x)$ which implies $\mathbb{R} \in \mathscr{U}$. Let $A_{\alpha} \in \mathscr{U}$ for each $\alpha \in \Lambda$ where $\Lambda$ is an arbitrary indexing set. We are to show that $\bigcup_{\alpha \in \Lambda}A_{\alpha} \in \mathscr{U}$. Let $A=\bigcup_{\alpha \in \Lambda}A_{\alpha}$ and $x \in A$. Then $x \in A_{\beta}$ for some $\beta \in \Lambda$. Since $A_{\beta} \in \mathscr{U}$, so $\mathbb{R}\setminus A_{\beta} \in \mathcal{S}_{\mathcal{I}}(x)$. Since $\mathbb{R} \setminus A \subset \mathbb{R} \setminus A_{\beta}$, it follows, by Lemma \ref{s5},  that $\mathbb{R} \setminus A \in \mathcal{S}_{\mathcal{I}}(x)$. Since $x \in A$ is arbitrarily chosen, so $A=\bigcup_{\alpha \in \Lambda}A_{\alpha} \in \mathscr{U}$. So, $\mathscr{U}$ is closed under arbitrary union. Next let us take $A_1, A_2 \in \mathscr{U}$ and $x \in A_1 \cap A_2 = D$. Then $x \in A_1$ and $x \in A_2$. So, $\mathbb{R} \setminus A_1, \mathbb{R} \setminus A_2 \in \mathcal{S}_{\mathcal{I}}(x)$. So by Lemma \ref{s6}, 
		$$\mathbb{R} \setminus D= \mathbb{R} \setminus (A_1 \cap A_2)=(\mathbb{R} \setminus A_1) \cup (\mathbb{R} \setminus A_2) \in \mathcal{S}_{\mathcal{I}}(x).$$
		Since $x \in D$ is arbitrary, so $D=A_1 \cap A_2 \in \mathscr{U}$. Thus $\mathscr{U}$ is closed under finite intersection. This proves the theorem.
	\end{proof}
	
	The topology $\mathscr{U}$ thus obtained is called the $\mathcal{I}$-Sparse set topology or in short the $\mathcal{I}/ \mathfrak{s}$-topology. The open (resp. closed) sets in this topology are called $\mathcal{I}/ \mathfrak{s}$-open (resp. $\mathcal{I} / \mathfrak{s}$-closed).
	
	\begin{thm}\label{s8}
		The $\mathcal{I} / \mathfrak{s}$-topology is finer than the $\mathcal{I}$-density topology.
	\end{thm}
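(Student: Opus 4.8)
The plan is to establish the set inclusion $\mathbf{T}_{\mathcal{I}} \subseteq \mathscr{U}$, which is exactly the assertion that the $\mathcal{I}/\mathfrak{s}$-topology is finer than the $\mathcal{I}$-density topology. Accordingly, I would fix an arbitrary $\mathcal{I}-d$ open set $E$ and an arbitrary point $x \in E$, and reduce the problem to verifying the single membership $\mathbb{R}\setminus E \in \mathcal{S}_{\mathcal{I}}(x)$; once this is shown for every $x \in E$, the definition of $\mathscr{U}$ gives $E \in \mathscr{U}$, and since $E$ was arbitrary the inclusion follows.

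First I would unwind the hypothesis at the chosen point $x$. By the definition of an $\mathcal{I}-d$ open set, $E$ is measurable and $\mathcal{I}-d_-(x,E)=1$. Because each ratio $\lambda(J_n \cap E)/\lambda(J_n)$ lies in $[0,1]$, the upper $\mathcal{I}$-density never exceeds $1$ and always dominates the lower one; hence $\mathcal{I}-d_-(x,E)=1$ forces $\mathcal{I}-d^-(x,E)=1$, so the $\mathcal{I}$-density $\mathcal{I}-d(x,E)$ exists and equals $1$.

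The central step is the complementation identity $\mathcal{I}-d(x,\mathbb{R}\setminus E)=1-\mathcal{I}-d(x,E)$. I would obtain it from the pointwise relation $\lambda(J_n\cap(\mathbb{R}\setminus E))/\lambda(J_n)=1-\lambda(J_n\cap E)/\lambda(J_n)$ holding for $n\in\mathscr{S}(J_n)$, combined with $\mathcal{I}-\limsup(1-x_n)=1-\mathcal{I}-\liminf x_n$ and the fact that the supremum over all admissible interval sequences $\{J_n\}$ of the quantity $1-\mathcal{I}-\liminf x_n$ equals $1$ minus the infimum of $\mathcal{I}-\liminf x_n$. This identity is precisely the one already used in the proof of the last theorem of Section \ref{sec3}, so it may be invoked directly. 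With $\mathcal{I}-d(x,E)=1$ it yields $\mathcal{I}-d(x,\mathbb{R}\setminus E)=0$.

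Finally, Lemma \ref{s4} applies verbatim to the measurable set $A=\mathbb{R}\setminus E$: since $\mathcal{I}-d(x,\mathbb{R}\setminus E)=0$, we conclude $\mathbb{R}\setminus E\in\mathcal{S}_{\mathcal{I}}(x)$, which completes the argument. I expect no genuine difficulty beyond bookkeeping; the one point deserving care is the complementation identity, specifically confirming that passing to the supremum over the family of admissible sequences $\{J_n\}$ commutes correctly with the constant shift, since the same family of sequences is used to compute the $\mathcal{I}$-densities of $E$ and of $\mathbb{R}\setminus E$.
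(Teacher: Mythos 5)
Your proof is correct and follows essentially the same route as the paper: from $\mathcal{I}-d_{-}(x,E)=1$ deduce that the full density $\mathcal{I}-d(x,E)=1$, pass to the complement to get $\mathcal{I}-d(x,\mathbb{R}\setminus E)=0$, and apply Lemma \ref{s4} to conclude $\mathbb{R}\setminus E\in\mathcal{S}_{\mathcal{I}}(x)$. If anything, you are slightly more careful than the paper, which only records $\mathcal{I}-d_{-}(x,\mathbb{R}\setminus A)=0$ before invoking Lemma \ref{s4} (whose hypothesis is that the density itself is zero), whereas your explicit complementation identity delivers exactly what that lemma requires.
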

	
	\begin{proof}
		Let $A$ be $\mathcal{I}-d$ open and $x \in A$. Then $\mathcal{I}-d_{-}(x,A)=1$ for all $x \in A$. Thus $\mathcal{I}-d(x,A)=1$ for all $x \in A$. This implies $\mathcal{I}-d_{-}(x,\mathbb{R} \setminus A)=0$ for all $x \in A$. So, by Lemma \ref{s4}, $\mathbb{R} \setminus A \in \mathcal{S}_{\mathcal{I}}(x)$. Thus, $A \in \mathscr{U}$. Since $x \in A$ is arbitrary, so $A$ is $\mathcal{I}/ \mathfrak{s}$ open. Consequently, $\mathcal{I} / \mathfrak{s}$-topology is finer than the $\mathcal{I}$-density topology. 
	\end{proof}
	
	\begin{thm}
		$(\mathbb{R}, \mathscr{U})$ is a Hausdorff space.
	\end{thm}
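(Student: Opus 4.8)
The plan is to deduce the Hausdorff property of $(\mathbb{R}, \mathscr{U})$ directly from two facts already in hand: that $(\mathbb{R}, \mathbf{T}_{\mathcal{I}})$ is Hausdorff (established in \cite{banerjee 4}), and that, by Theorem \ref{s8}, the $\mathcal{I}/\mathfrak{s}$-topology is finer than the $\mathcal{I}$-density topology. The underlying principle is the elementary observation that the Hausdorff separation axiom is preserved under passage to a finer topology: enlarging the collection of open sets can only make it easier to separate points.

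Concretely, I would proceed as follows. Fix two distinct points $x, y \in \mathbb{R}$. Since $(\mathbb{R}, \mathbf{T}_{\mathcal{I}})$ is Hausdorff, there exist $\mathcal{I}-d$ open sets $U$ and $V$ with $x \in U$, $y \in V$ and $U \cap V = \emptyset$. By Theorem \ref{s8} every $\mathcal{I}-d$ open set belongs to $\mathscr{U}$, so $U, V \in \mathscr{U}$. Thus $U$ and $V$ are disjoint $\mathcal{I}/\mathfrak{s}$-open neighbourhoods of $x$ and $y$ respectively, and since $x, y$ were arbitrary distinct points, $(\mathbb{R}, \mathscr{U})$ is Hausdorff.

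There is essentially no serious obstacle here: the result is a formal consequence of the inclusion $\mathbf{T}_{\mathcal{I}} \subseteq \mathscr{U}$ furnished by Theorem \ref{s8} together with the Hausdorffness of the coarser space. The only point requiring care is to invoke Theorem \ref{s8} in the correct direction — it is precisely the containment of $\mathbf{T}_{\mathcal{I}}$ in $\mathscr{U}$ (\emph{finer} meaning more open sets) that permits the separating neighbourhoods to be transported upward into $\mathscr{U}$. Should one prefer an argument internal to the $\mathcal{I}/\mathfrak{s}$-topology, a fallback is to note that singletons, having measure zero, give rise to separating neighbourhoods directly; but the refinement argument above is by far the most economical route and is the one I would record.
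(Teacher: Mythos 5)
Your proposal is correct and coincides with the paper's own argument: the paper likewise deduces Hausdorffness of $(\mathbb{R}, \mathscr{U})$ from the fact that the $\mathcal{I}/\mathfrak{s}$-topology is finer than the $\mathcal{I}$-density topology (Theorem \ref{s8}) together with the Hausdorffness of $(\mathbb{R}, \mathbf{T}_{\mathcal{I}})$ established in \cite{banerjee 4}. Your write-up merely makes explicit the routine separation argument that the paper leaves implicit.
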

	\begin{proof}
		Since, by Theorem 4.2, $\mathcal{I} / \mathfrak{s}$-topology is finer than the $\mathcal{I}$-density topology and by Proposition 7.1 \cite{banerjee 4}, $(\mathbb{R}, \mathbf{T}_{\mathcal{I}})$ is Hausdorff space, so $(\mathbb{R}, \mathscr{U})$ is a Hausdorff space.
	\end{proof}
	
	\begin{thm}
		$(\mathbb{R}, \mathscr{U})$ is not separable.
	\end{thm}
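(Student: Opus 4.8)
The plan is to leverage the two facts already in hand: that the $\mathcal{I}/\mathfrak{s}$-topology $\mathscr{U}$ refines the $\mathcal{I}$-density topology $\mathbf{T}_\mathcal{I}$ (Theorem \ref{s8}), and that $(\mathbb{R}, \mathbf{T}_\mathcal{I})$ is not separable (Theorem \ref{s2}). The guiding observation is that separability is inherited downward along the refinement order: if a set $D$ is dense in a finer topology, then, because every open set of the coarser topology is also open in the finer one, $D$ meets every nonempty coarser-open set and hence is dense in the coarser topology as well.

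First I would argue by contradiction. Suppose $(\mathbb{R}, \mathscr{U})$ were separable, witnessed by a countable set $D$ that is $\mathscr{U}$-dense. By Theorem \ref{s8} we have $\mathbf{T}_\mathcal{I} \subseteq \mathscr{U}$, so every nonempty $\mathbf{T}_\mathcal{I}$-open set is $\mathscr{U}$-open and therefore meets $D$. Thus $D$ would be a countable $\mathbf{T}_\mathcal{I}$-dense set, contradicting Theorem \ref{s2}. This yields the conclusion directly.

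Alternatively, I would give a constructive proof mirroring the proof of Theorem \ref{s2}. Let $C$ be any countable subset of $\mathbb{R}$; then $\lambda(C) = 0$, so by Theorem \ref{s99} the set $C$ is $\mathcal{I}-d$ closed, whence $\mathbb{R} \setminus C$ is $\mathcal{I}-d$ open and, by Theorem \ref{s8}, also $\mathcal{I}/\mathfrak{s}$-open. Since $C$ is countable while $\mathbb{R}$ is uncountable, $\mathbb{R} \setminus C$ is a nonempty $\mathscr{U}$-open set disjoint from $C$, so $C$ is not dense in $(\mathbb{R}, \mathscr{U})$. As $C$ was an arbitrary countable set, no countable set is $\mathscr{U}$-dense, and non-separability follows.

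There is essentially no serious obstacle here: both routes are short inheritance arguments built on results already established, which is also the pattern used in the preceding Hausdorff theorem. The only point requiring care is the direction of the implication — density transfers from the finer topology to the coarser one, not conversely — so in the contradiction argument one must assume a dense set in $\mathscr{U}$ and deduce density in $\mathbf{T}_\mathcal{I}$, rather than the other way around.
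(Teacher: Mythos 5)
Your first argument is exactly the paper's proof: assume a countable $\mathscr{U}$-dense set, observe via Theorem \ref{s8} that it would also be dense in $(\mathbb{R},\mathbf{T}_{\mathcal{I}})$, and contradict Theorem \ref{s2}. Your alternative direct route (any countable $C$ has $\lambda(C)=0$, so $\mathbb{R}\setminus C$ is $\mathcal{I}-d$ open and hence $\mathcal{I}/\mathfrak{s}$-open, avoiding $C$) is also correct, and your closing remark about the direction in which density transfers is exactly the right point of care.
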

	\begin{proof}
		Let, if possible, $(\mathbb{R}, \mathscr{U})$ be separable. Consider a countable set $C$ in $\mathbb{R}$ which is dense in $(\mathbb{R}, \mathscr{U})$. Thus for any $\mathcal{I}/ \mathfrak{s}$-open subset $V$ of $\mathbb{R}$, $V \cap C \neq \emptyset$. Since $\mathcal{I} / \mathfrak{s}$-topology is finer than the $\mathcal{I}$-density topology, so for the same set $C$ if we consider any $\mathcal{I}-d$ open set $W$ in $\mathbb{R}$ then $W$ is also $\mathcal{I} / \mathfrak{s}$-open set and $W \cap C \neq \emptyset$. So, $C$ is a countable set which is dense in $(\mathbb{R}, \mathbf{T}_{\mathcal{I}})$. Therefore, $(\mathbb{R}, \mathbf{T}_{\mathcal{I}})$ is separable. This is a contradiction to Theorem \ref{s2}. Therefore, $(\mathbb{R}, \mathscr{U})$ is not separable. 
	\end{proof}
	\begin{thm}
		For any Lebesgue measurable set $A$ and $x \in \mathbb{R}$, if $\mathcal{I}-d_{-}(x,A')>0$ where $A'$ is any measurable cover of $A$, then $x$ is an $\mathcal{I}/\mathfrak{s}$-limit point of $A$.
	\end{thm}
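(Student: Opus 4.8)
The plan is to argue by contradiction, exploiting the ring structure of $\mathcal{S}_{\mathcal{I}}(x)$ together with the final theorem of the previous section. Suppose $x$ fails to be an $\mathcal{I}/\mathfrak{s}$-limit point of $A$. Then there is an $\mathcal{I}/\mathfrak{s}$-open set $V$ with $x \in V$ and $V \cap (A \setminus \{x\}) = \emptyset$; equivalently $A \setminus \{x\} \subseteq \mathbb{R}\setminus V$. Because $V$ is $\mathcal{I}/\mathfrak{s}$-open and contains $x$, the definition of $\mathscr{U}$ gives $\mathbb{R}\setminus V \in \mathcal{S}_{\mathcal{I}}(x)$.

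Next I would transfer sparseness from $\mathbb{R}\setminus V$ to $A$. Since $A\setminus\{x\} \subseteq \mathbb{R}\setminus V$, Lemma \ref{s5} yields $A\setminus\{x\} \in \mathcal{S}_{\mathcal{I}}(x)$. The one-point set $A \cap \{x\}$ (which is either empty or $\{x\}$) has Lebesgue measure zero, so $\mathcal{I}-d(x, A\cap\{x\}) = 0$ and hence $A\cap\{x\} \in \mathcal{S}_{\mathcal{I}}(x)$ by Lemma \ref{s4}. Writing $A = (A\setminus\{x\}) \cup (A\cap\{x\})$ and applying Lemma \ref{s6}, I conclude $A \in \mathcal{S}_{\mathcal{I}}(x)$.

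Now I would invoke the last theorem of Section \ref{sec3} with the choice $F = \emptyset$: since $\emptyset$ is measurable and $\mathcal{I}-d(x,\emptyset)=0$, that theorem gives $\mathcal{I}-d(x, A'\cup\emptyset) = \mathcal{I}-d(x,A') = 0$ for every measurable cover $A'$ of $A$. In particular $\mathcal{I}-d_{-}(x,A') = 0$, which contradicts the hypothesis $\mathcal{I}-d_{-}(x,A') > 0$. Hence no such neighborhood $V$ can exist, and $x$ is an $\mathcal{I}/\mathfrak{s}$-limit point of $A$.

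The routine parts are the set-theoretic bookkeeping and the measure-zero observation for the single point; the one place that needs care is recognizing that the correct specialization $F = \emptyset$ of the preceding theorem converts sparseness of $A$ into the vanishing of the full $\mathcal{I}$-density of every measurable cover, which is exactly what clashes with the positive lower-density hypothesis. I would also confirm that \emph{$\mathcal{I}/\mathfrak{s}$-limit point} is understood in the usual topological sense (every $\mathcal{I}/\mathfrak{s}$-open neighborhood of $x$ meets $A\setminus\{x\}$), so that the neighborhood $V$ extracted in the contradiction step is legitimate.
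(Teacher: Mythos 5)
Your proof is correct, and while it shares the paper's overall contrapositive skeleton, its middle portion takes a genuinely different and in fact cleaner route. The paper, after extracting a neighborhood $U$ of $x$ with $U \cap (A\setminus\{x\})=\emptyset$, assumes \emph{without loss of generality} that $U$ is measurable, notes $A \subset (\mathbb{R}\setminus U)\cup\{x\}$, and then runs an explicit interval-sequence computation --- using that a measurable cover satisfies $\lambda(A'\cap Q_k)=\lambda^{\star}(A\cap Q_k)$ --- to obtain the inequality $\mathcal{I}-d_{-}(x,A') \leq \mathcal{I}-d_{-}(x,\mathbb{R}\setminus U)$; it then asserts that $\mathbb{R}\setminus U \in \mathcal{S}_{\mathcal{I}}(x)$ forces $\mathcal{I}-d_{-}(x,\mathbb{R}\setminus U)=0$, a step whose justification is exactly your $F=\emptyset$ specialization of the final theorem of Section~\ref{sec3}, applied to the measurable set $\mathbb{R}\setminus U$ viewed as its own cover. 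You instead never touch interval sequences: Lemma~\ref{s5} pushes sparseness from $\mathbb{R}\setminus V$ down to $A\setminus\{x\}$, Lemmas~\ref{s4} and~\ref{s6} absorb the single point so that $A$ itself lies in $\mathcal{S}_{\mathcal{I}}(x)$, and the Section~\ref{sec3} theorem with $F=\emptyset$ then kills the $\mathcal{I}$-density of \emph{every} measurable cover $A'$ of $A$ at once. Your version buys two things: it requires no measurability of the neighborhood $V$ --- an $\mathcal{I}/\mathfrak{s}$-open set need not be measurable, since $\mathcal{S}_{\mathcal{I}}(x)$ is closed under arbitrary subsets, so the paper's WLOG reduction is actually left unjustified and your argument sidesteps that small gap --- and it makes explicit the key fact (every measurable cover of a set that is $\mathcal{I}$-sparse at $x$ has $\mathcal{I}$-density zero at $x$) that the paper invokes silently. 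What the paper's computation buys in exchange is the intermediate monotonicity inequality between the lower densities of $A'$ and $\mathbb{R}\setminus U$, but since both arguments ultimately rest on the same $F=\emptyset$ application of the Section~\ref{sec3} theorem, nothing essential is lost in your shorter route.
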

	\begin{proof}
		Suppose $x$ is not an $\mathcal{I}/\mathfrak{s}$-limit point of $A$. Then, without any loss of generality, there is a measurable $\mathcal{I}/\mathfrak{s}$-open set $U$ containing $x$ such that $U \cap (A \setminus \{x\})=\emptyset$. Therefore, $A \subset X \setminus \{U \setminus \{x\}\}=(X\setminus U) \cup \{x\}$. 
		
		Let $\{Q_k\}$ be any sequence of closed intervals about the point $x$ such that $\mathscr{S}(Q_k) \in \mathcal{F}(\mathcal{I})$. Since $\lambda(\{x\})=0$ so $\lambda^{\star}(A \cap Q_k) \leq \lambda^{\star}((X\setminus U) \cap Q_k)$. For any measurable cover $A'$ of $A$, $\lambda^{\star}(A)=\lambda(A')$. Thus, $\lambda (A' \cap Q_k) \leq \lambda ((X\setminus U) \cap Q_k)$. So, $$\frac{\lambda (A' \cap Q_k)}{\lambda(Q_k)} \leq \frac{\lambda ((X\setminus U) \cap Q_k)}{\lambda(Q_k)} \implies \liminf \frac{\lambda (A' \cap Q_k)}{\lambda(Q_k)} \leq \liminf \frac{\lambda ((X\setminus U) \cap Q_k)}{\lambda(Q_k)}.$$
		Therefore,
		\begin{align*}
			\mathcal{I}-d_{-}(x,A') &= \inf \left\{\liminf \frac{\lambda (A' \cap Q_k)}{\lambda(Q_k)}: \{Q_k\}_{k \in \mathbb{N}} \ \mbox{such that} \ \mathscr{S}(Q_k) \in \mathcal{F}(\mathcal{I}) \right \} \\
			& \leq \inf \left\{\liminf \frac{\lambda ((X\setminus U) \cap Q_k)}{\lambda(Q_k)}: \{Q_k\}_{k \in \mathbb{N}} \ \mbox{such that} \ \mathscr{S}(Q_k) \in \mathcal{F}(\mathcal{I}) \right \}\\
			&= \mathcal{I}-d_{-}(x,X\setminus U).
		\end{align*}
		Since $U$ is $\mathcal{I}/\mathfrak{s}$-open and $x \in U$, $X \setminus U \in \mathcal{S}_{\mathcal{I}}(x)$ and so $\mathcal{I}-d_{-}(x,X\setminus U)=0$ which implies $\mathcal{I}-d_{-}(x,A')=0$. Thus, by contraposition, the theorem is proved.
	\end{proof}
	
	\begin{thm}\label{s9}
		A necessary and sufficient condition that a measurable set $A$ is closed and discrete in the $\mathcal{I}/\mathfrak{s}$-topology is that $\lambda(A)=0$.
	\end{thm}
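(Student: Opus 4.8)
The plan is to prove the two implications separately, leaning on the comparison with the $\mathcal{I}$-density topology (Theorem \ref{s8}) for the easy direction and on the $\mathcal{I}$-analogue of the Lebesgue density theorem for the hard one.

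For sufficiency I would argue as follows. Suppose $\lambda(A)=0$. By Theorem \ref{s99}, $A$ is $\mathcal{I}-d$ closed and discrete in $(\mathbb{R},\mathbf{T}_{\mathcal{I}})$. Since, by Theorem \ref{s8}, the $\mathcal{I}/\mathfrak{s}$-topology is finer than the $\mathcal{I}$-density topology, every $\mathbf{T}_{\mathcal{I}}$-open set is $\mathcal{I}/\mathfrak{s}$-open; hence the $\mathcal{I}-d$ closed set $A$ has $\mathcal{I}/\mathfrak{s}$-open complement and is therefore $\mathcal{I}/\mathfrak{s}$-closed, while the $\mathbf{T}_{\mathcal{I}}$-open neighbourhoods witnessing discreteness of $A$ remain $\mathcal{I}/\mathfrak{s}$-open, so $A$ stays discrete. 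This direction is routine.

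For necessity the strategy is to show that a measurable set $A$ which is $\mathcal{I}/\mathfrak{s}$-closed and discrete must be $\mathcal{I}$-sparse at \emph{every} point of $\mathbb{R}$, and then to derive $\lambda(A)=0$. First, closedness means $\mathbb{R}\setminus A\in\mathscr{U}$, which by the definition of $\mathscr{U}$ gives $A\in\mathcal{S}_{\mathcal{I}}(x)$ for every $x\notin A$. Next, for $x\in A$, discreteness supplies an $\mathcal{I}/\mathfrak{s}$-open set $U$ with $U\cap A=\{x\}$; since $x\in U$ and $U$ is open we have $\mathbb{R}\setminus U\in\mathcal{S}_{\mathcal{I}}(x)$, and from $A\setminus\{x\}\subseteq\mathbb{R}\setminus U$ Lemma \ref{s5} yields $A\setminus\{x\}\in\mathcal{S}_{\mathcal{I}}(x)$. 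As $\lambda(\{x\})=0$ gives $\{x\}\in\mathcal{S}_{\mathcal{I}}(x)$ by Lemma \ref{s4}, Lemma \ref{s6} forces $A=(A\setminus\{x\})\cup\{x\}\in\mathcal{S}_{\mathcal{I}}(x)$. Thus $A\in\mathcal{S}_{\mathcal{I}}(x)$ for all $x\in\mathbb{R}$.

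Finally, I would test the sparseness condition against $B=\emptyset$ (which satisfies $\mathcal{I}-d(x,\emptyset)=0<1$) and use that a measurable $A$ is its own measurable cover to obtain $\mathcal{I}-d(x,A)<1$ for every $x\in\mathbb{R}$; that is, no point of $\mathbb{R}$ is an $\mathcal{I}$-density point of $A$. But the $\mathcal{I}$-density theorem quoted earlier asserts that almost every point of $A$ is an $\mathcal{I}$-density point of $A$, so the set of points of $A$ that fail to be $\mathcal{I}$-density points of $A$ is all of $A$ yet has measure zero, whence $\lambda(A)=0$. The main obstacle is the necessity direction, and within it the decisive step is converting the combinatorics of \emph{closed and discrete} into the uniform statement $A\in\mathcal{S}_{\mathcal{I}}(x)$ for all $x$ (in particular the careful treatment of the isolated point via Lemmas \ref{s4}, \ref{s5} and \ref{s6}); once that is in hand, the contradiction with the almost-everywhere $\mathcal{I}$-density theorem closes the argument.
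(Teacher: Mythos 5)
Your proof is correct, and while your sufficiency direction is essentially the paper's (both pull the conclusion of Theorem \ref{s99} back through the finer topology provided by Theorem \ref{s8}, noting that closedness and the isolating neighbourhoods survive the passage to a finer topology), your necessity direction takes a genuinely different route. The paper argues via limit points: since a closed discrete set has no $\mathcal{I}/\mathfrak{s}$-limit points, the contrapositive of the limit-point criterion proved immediately before Theorem \ref{s9} gives $\mathcal{I}-d_{-}(x,A)=0$ for all $x\in\mathbb{R}$, and the almost-everywhere $\mathcal{I}$-density theorem then forces $\lambda(A)=0$ (the paper's decomposition $A=A_1\cup A_2$ with ``$A_2=\mathbb{R}\setminus A_1$'' contains a slip --- it should be $A_2=A\setminus A_1$ --- which your argument sidesteps entirely). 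You instead stay inside the sparse-set calculus of Section \ref{sec3}: closedness gives $A\in\mathcal{S}_{\mathcal{I}}(x)$ for $x\notin A$; for $x\in A$, discreteness together with Lemmas \ref{s5}, \ref{s4} and \ref{s6} absorbs the isolated point, giving $A=(A\setminus\{x\})\cup\{x\}\in\mathcal{S}_{\mathcal{I}}(x)$; and testing sparseness against $B=\emptyset$ with $C=A$ is legitimate, since a measurable set is a measurable cover of itself and any two measurable covers of $A$ differ by a null set, so their densities at $x$ agree, making the quantifier over covers in Definition \ref{s3} harmless. This yields $\mathcal{I}-d(x,A)<1$ at every point, which is all the density theorem needs. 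The paper's route buys the stronger pointwise conclusion $\mathcal{I}-d_{-}(x,A)=0$ and reuses its limit-point theorem; yours avoids that theorem altogether, isolates the intermediate statement that a measurable $\mathcal{I}/\mathfrak{s}$-closed discrete set is $\mathcal{I}$-sparse at \emph{every} point of $\mathbb{R}$ (which has independent interest), and closes the argument with only the weaker ``density not equal to $1$.''
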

	\begin{proof}
		Assume that $\lambda(A)=0$. Then $\mathcal{I}-d(x,A)=0$ for all $x \in \mathbb{R}$. By Theorem 4.6 \cite{banerjee 4}, since $A$ is $\mathcal{I}-d$ closed and $\mathcal{I} / \mathfrak{s}$-topology is finer than the $\mathcal{I}$-density topology, $A$ is $\mathcal{I} / \mathfrak{s}$ closed. Also, by Theorem 4.6 \cite{banerjee 4}, $A$ is discrete in $\mathcal{I} / \mathfrak{s}$ topology.
		
		Conversely, assume that $A$ is $\mathcal{I} / \mathfrak{s}$-closed and discrete. Then, since $A$ has no limit points, $\mathcal{I}-d_{-}(x,A)=0$ for all $x \in \mathbb{R}$. If $A_1=\{x: x \in A \ \mbox{and} \ \mathcal{I}-d_{-}(x,A)=1\}$ and $A_2=\mathbb{R} \setminus A_1$, then $\lambda(A_2)=0$, by Theorem 3.2 \cite{banerjee 4}. Since $\mathcal{I}-d_{-}(x,A)=0$ for all $x \in \mathbb{R}$, so $A_1 =\emptyset$. Now, $A=A_1 \cup A_2$ and $\lambda(A)=\lambda(A_2)=0$. This proves the theorem.
	\end{proof}
	
	\begin{thm}\label{s10}
		If $A$ is any measurable subset of $\mathbb{R}$, then the set $G=\{x \in A: \mathbb{R} \setminus A  \in \mathcal{S}_{\mathcal{I}}(x) \}$ is $\mathcal{I}/\mathfrak{s}$-interior of $A$.
	\end{thm}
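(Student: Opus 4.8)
The plan is to prove that $G$ coincides with the $\mathcal{I}/\mathfrak{s}$-interior of $A$ by establishing the two standard inclusions: every $\mathcal{I}/\mathfrak{s}$-open subset of $A$ is contained in $G$, and $G$ is itself an $\mathcal{I}/\mathfrak{s}$-open subset of $A$. Since the interior of $A$ is the union of all $\mathcal{I}/\mathfrak{s}$-open sets contained in $A$, these two facts together force $G=\operatorname{int}_{\mathcal{I}/\mathfrak{s}}(A)$.

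First I would dispose of the easy inclusion. Let $V$ be any $\mathcal{I}/\mathfrak{s}$-open set with $V\subseteq A$ and fix $x\in V$. By the definition of $\mathscr{U}$ we have $\mathbb{R}\setminus V\in\mathcal{S}_{\mathcal{I}}(x)$; since $V\subseteq A$ gives $\mathbb{R}\setminus A\subseteq\mathbb{R}\setminus V$, Lemma \ref{s5} yields $\mathbb{R}\setminus A\in\mathcal{S}_{\mathcal{I}}(x)$, and as $x\in A$ this means $x\in G$. Hence $V\subseteq G$, and taking the union over all such $V$ shows $\operatorname{int}_{\mathcal{I}/\mathfrak{s}}(A)\subseteq G$. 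Since $G\subseteq A$ is immediate from the definition, it only remains to prove that $G$ is $\mathcal{I}/\mathfrak{s}$-open, i.e.\ that $\mathbb{R}\setminus G\in\mathcal{S}_{\mathcal{I}}(x)$ for every $x\in G$.

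For the openness of $G$ I would decompose $\mathbb{R}\setminus G=(\mathbb{R}\setminus A)\cup(A\setminus G)$ and exploit that $\mathcal{S}_{\mathcal{I}}(x)$ is closed under finite unions (Lemma \ref{s6}). Fix $x\in G$; then $\mathbb{R}\setminus A\in\mathcal{S}_{\mathcal{I}}(x)$ directly from the definition of $G$, so everything reduces to showing $A\setminus G\in\mathcal{S}_{\mathcal{I}}(x)$. The key observation is that $A\setminus G$ is negligible. Indeed, by the $\mathcal{I}$-density theorem (Theorem 3.2 of \cite{banerjee 4}) almost every point $y$ of $A$ is an $\mathcal{I}$-density point of $A$, and at such a point $\mathcal{I}-d(y,A)=1$ forces $\mathcal{I}-d(y,\mathbb{R}\setminus A)=0$ via the complementation identities $\mathcal{I}-d^{-}(y,\mathbb{R}\setminus E)=1-\mathcal{I}-d_{-}(y,E)$ and $\mathcal{I}-d_{-}(y,\mathbb{R}\setminus E)=1-\mathcal{I}-d^{-}(y,E)$; then Lemma \ref{s4} gives $\mathbb{R}\setminus A\in\mathcal{S}_{\mathcal{I}}(y)$, so $y\in G$. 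Thus $A\setminus G$ is contained in the set of non-$\mathcal{I}$-density points of $A$, which has Lebesgue measure zero. Consequently $A\setminus G$ is a measurable null set, whence $\mathcal{I}-d(x,A\setminus G)=0$ and Lemma \ref{s4} gives $A\setminus G\in\mathcal{S}_{\mathcal{I}}(x)$. Combining with Lemma \ref{s6} yields $\mathbb{R}\setminus G\in\mathcal{S}_{\mathcal{I}}(x)$, so $G$ is $\mathcal{I}/\mathfrak{s}$-open and therefore $G\subseteq\operatorname{int}_{\mathcal{I}/\mathfrak{s}}(A)$.

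I expect the main obstacle to be precisely this second inclusion, namely the claim that $A\setminus G$ is a measure-zero set. This rests squarely on the $\mathcal{I}$-density analogue of the Lebesgue density theorem and on the complementation relation between the upper/lower $\mathcal{I}$-density of a set and of its complement, which I would verify from the definitions using $\mathcal{I}-\limsup(1-x_n)=1-\mathcal{I}-\liminf x_n$. One technical point I would be careful about is that $G$ need not be obviously measurable, but since $A\setminus G$ sits inside a measurable null set it is automatically measurable by completeness of $\lambda$, which is all that is needed in order to apply Lemma \ref{s4}.
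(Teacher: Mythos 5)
Your proposal is correct and takes essentially the same route as the paper's proof: both establish $\lambda(A\setminus G)=0$ via the $\mathcal{I}$-density analogue of the Lebesgue density theorem (Theorem 3.2 of \cite{banerjee 4}) together with Lemma \ref{s4}, both prove openness of $G$ through the decomposition $\mathbb{R}\setminus G=(\mathbb{R}\setminus A)\cup(A\setminus G)$, and both identify $G$ as the largest $\mathcal{I}/\mathfrak{s}$-open subset of $A$ using Lemma \ref{s5}. The only cosmetic difference is that you package the openness step via Lemmas \ref{s4} and \ref{s6}, whereas the paper re-runs the subadditivity estimate of Lemma \ref{s1} inline against an arbitrary measurable $F$ with $\mathcal{I}-d(x,F)<1$; your version is equivalent and, if anything, slightly more careful about measurable covers and the complementation identity the paper leaves implicit.
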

	
	\begin{proof}
		First we note that $G \subset A$. Now let $G_1=\{x \in A: \mathcal{I}-d(x,\mathbb{R} \setminus A)=0\}$. Since $A$ is measurable, so by  Theorem 3.2 \cite{banerjee 4}, $\lambda(A \setminus G_1)=0$. By lemma \ref{s4}, since $\mathcal{I}-d(x,\mathbb{R} \setminus A)=0$ implies $\mathbb{R} \setminus A 
		\in \mathcal{S}_{\mathcal{I}}(x)$, so $G_1 \subset G$. Thus, $A \setminus G \subset A \setminus G_1$ implies $\lambda(A \setminus G)=0$. 
		
		We need to show that $G$ is $\mathcal{I}/\mathfrak{s}$-open. Let $x \in G$ and $F \subset \mathbb{R}$ be a measurable set such that $\mathcal{I}-d(x,F)<1$. Since $x \in G$ and $\mathbb{R} \setminus A \in \mathcal{S}_{\mathcal{I}}(x)$, so $\mathcal{I}-d(x,F \cup (\mathbb{R} \setminus A))<1$. Now, $\mathbb{R} \setminus G=(\mathbb{R} \setminus A) \cup (A \setminus G)$. Consequently,
		\begin{align*}
			\mathcal{I}-d(x,F \cup (\mathbb{R} \setminus G)) &= \mathcal{I}-d(x,F \cup (\mathbb{R} \setminus A) \cup (A \setminus G))\\
			& \leq \mathcal{I}-d(x,F \cup (\mathbb{R} \setminus A))+ \mathcal{I}-d(x,A \setminus G), \ \mbox{by subadditivity lemma \ref{s1}}\\
			&= \mathcal{I}-d(x,F \cup (\mathbb{R} \setminus A)), \ \mbox{since} \ \lambda(A \setminus G)=0\\
			&<1.
		\end{align*}
		So, $\mathbb{R} \setminus G \in \mathcal{S}_{\mathcal{I}}(x)$. Thus, $G$ is $\mathcal{I}/\mathfrak{s}$-open.

		Next let $V$ be any $\mathcal{I}/\mathfrak{s}$-open set contained in $A$. By definition, $\mathbb{R} \setminus V \in \mathcal{S}_{\mathcal{I}}(x)$ for all $x \in V$. Since $\mathbb{R} \setminus A \subset \mathbb{R} \setminus V$, so by Lemma \ref{s5}, $\mathbb{R} \setminus A \in \mathcal{S}_{\mathcal{I}}(x)$ for all $x \in V$. As a result, $V \subset G$. Thus, $G$ is the largest $\mathcal{I}/\mathfrak{s}$-open set contained in $A$. This proves the theorem. 
	\end{proof}

	\begin{thm}
		For any measurable set $A$, if $\lambda(A)=0$ then $A$ is nowhere dense in the $\mathcal{I}/\mathfrak{s}$-topology.
	\end{thm}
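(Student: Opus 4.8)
The goal is to show that any measurable set $A$ with $\lambda(A)=0$ is nowhere dense in the $\mathcal{I}/\mathfrak{s}$-topology. Recall that a set is nowhere dense precisely when the interior of its closure is empty. So the plan is to compute, or at least control, both the $\mathcal{I}/\mathfrak{s}$-closure and the $\mathcal{I}/\mathfrak{s}$-interior of $A$, and to exploit the fact that $\lambda$-null measurable sets are extremely small in this topology.

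The cleanest route begins with Theorem \ref{s9}: since $\lambda(A)=0$, the set $A$ is $\mathcal{I}/\mathfrak{s}$-closed (and in fact discrete). Being closed, $A$ equals its own $\mathcal{I}/\mathfrak{s}$-closure, so it suffices to prove that the $\mathcal{I}/\mathfrak{s}$-interior of $A$ is empty. The natural tool for the interior is Theorem \ref{s10}, which identifies the $\mathcal{I}/\mathfrak{s}$-interior of a measurable set $A$ with $G=\{x \in A: \mathbb{R}\setminus A \in \mathcal{S}_{\mathcal{I}}(x)\}$. Thus I would try to show $G=\emptyset$, equivalently that for every $x\in A$ the complement $\mathbb{R}\setminus A$ fails to be $\mathcal{I}$-sparse at $x$.

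Here is where the null hypothesis does the work. If $\lambda(A)=0$, then $\lambda(\mathbb{R}\setminus A)$ has full measure on every neighborhood of any point, and more to the point $\mathcal{I}-d(x,\mathbb{R}\setminus A)=1$ for all $x\in\mathbb{R}$ (since $\mathcal{I}-d(x,A)=0$ by nullity, and the $\mathcal{I}$-density of a measurable set and its complement sum to $1$, as used in the proof of Theorem in Section \ref{sec3}). Now suppose for contradiction that some $x\in A$ lies in $G$, so $\mathbb{R}\setminus A\in\mathcal{S}_{\mathcal{I}}(x)$. Take the test set $B=\mathbb{R}\setminus A$ itself; but $\mathcal{I}-d(x,B)=1$, so $B$ is not an admissible test set for sparseness, and I must instead choose a measurable $B$ with $\mathcal{I}-d(x,B)<1$ whose union with a measurable cover of $\mathbb{R}\setminus A$ has density $1$. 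The most direct choice is $B=\emptyset$ (density $0<1$); then a measurable cover $C$ of $\mathbb{R}\setminus A$ satisfies $\mathcal{I}-d(x,C\cup\emptyset)=\mathcal{I}-d(x,C)=1$, since $C\supseteq\mathbb{R}\setminus A$ forces $\mathcal{I}-d(x,C)\geq\mathcal{I}-d(x,\mathbb{R}\setminus A)=1$ by the monotonicity half of Lemma \ref{s1}. This violates the defining inequality $\mathcal{I}-d(x,C\cup B)<1$ for $\mathbb{R}\setminus A\in\mathcal{S}_{\mathcal{I}}(x)$, giving the contradiction. Hence $G=\emptyset$, the $\mathcal{I}/\mathfrak{s}$-interior of $A$ is empty, and since $A$ is already closed, $A$ is nowhere dense.

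The step I expect to require the most care is the verification that $\mathcal{I}-d(x,\mathbb{R}\setminus A)=1$ for every $x$, together with the legitimacy of the test set used to break sparseness: one must be sure that the admissibility condition $\mathcal{I}-d(x,B)<1$ is met while still forcing $\mathcal{I}-d(x,C\cup B)=1$, and the empty set neatly threads this needle precisely because a measurable cover of a full-density set already has density $1$ on its own. Everything else is an invocation of Theorem \ref{s9}, Theorem \ref{s10}, and the monotonicity clause of Lemma \ref{s1}.
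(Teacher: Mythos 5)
Your proposal is correct, and its skeleton is the same as the paper's: both cite Theorem \ref{s9} to conclude that $A$ is $\mathcal{I}/\mathfrak{s}$-closed (so the closure is $A$ itself) and Theorem \ref{s10} to identify the $\mathcal{I}/\mathfrak{s}$-interior with $G=\{x\in A:\mathbb{R}\setminus A\in\mathcal{S}_{\mathcal{I}}(x)\}$, reducing everything to $G=\emptyset$. Where you diverge is the final step. The paper asserts, citing only Theorem \ref{s10}, that the interior is contained in $\mathcal{B}=\{x\in A:\mathcal{I}-d(x,A)=1\}$ and then notes $\mathcal{B}=\emptyset$ because $\lambda(A)=0$; strictly speaking, passing from \enquote{$\mathbb{R}\setminus A$ is $\mathcal{I}$-sparse at $x$} to \enquote{$\mathcal{I}-d(x,A)=1$} needs an extra argument (e.g., Lemma \ref{s11} applied to the open set $G$, a measurable cover of $G$ intersected with $A$, and monotonicity from Lemma \ref{s1}), which the paper glosses over. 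You instead refute sparseness head-on: testing Definition \ref{s3} with $B=\emptyset$ (admissible, since $\mathcal{I}-d(x,\emptyset)=0<1$) and noting that $\mathbb{R}\setminus A$, being measurable, is a measurable cover of itself with $\mathcal{I}-d(x,\mathbb{R}\setminus A)=1$ (immediate, since $\lambda(A\cap J_n)=0$ forces every ratio to equal $1$, so no appeal to a general complementation identity is even needed), you get $\mathcal{I}-d(x,C\cup B)=1$, contradicting the defining inequality. Your route is slightly more elementary and proves something stronger — $\mathbb{R}\setminus A$ is $\mathcal{I}$-sparse at no point of $\mathbb{R}$ whatsoever — while also repairing the paper's under-justified inclusion $A^{\star}\subset\mathcal{B}$; the paper's route, via density-one points, has the advantage of reusing its Lemma \ref{s11} machinery. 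One small caution: your opening framing mentions that $\mathcal{I}-d(x,\mathbb{R}\setminus A)=1$ \enquote{since the densities of a set and its complement sum to $1$}; that identity is used but never formally proved in the paper, so it is good that your actual argument only needs the direct computation for null $A$, which is where, as you say, the needle is threaded.
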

	\begin{proof}
		Since $\lambda(A)=0$, so $A$ is $\mathcal{I}/\mathfrak{s}$-closed, by Theorem \ref{s9}. So, $A^{\kappa}=A$ where $A^{\kappa}$ denotes the $\mathcal{I}/\mathfrak{s}$-closure of a set $A \subset \mathbb{R}$. If the set of all $\mathcal{I}/\mathfrak{s}$-interior points of $A$ is denoted by $A^{\star}$ then, by Theorem \ref{s10}, $A^{\star} \subset \{x: x\in A \ \mbox{and} \ \mathcal{I}-d(x,A)=1\}= \mathcal{B} $(say). Since $\lambda(A)=0$ so $\mathcal{B}=\emptyset$ and consequently $A^{\star}=\emptyset$. So $(A^{\kappa})^{\star}=A^{\star}=\emptyset$. Therefore, $A$ is nowhere dense in the $\mathcal{I}/\mathfrak{s}$-topology. This proves the theorem. 
	\end{proof}
	
	\begin{lma}\label{s11}
		If $A$ is $\mathcal{I}/\mathfrak{s}$-open and $x \in A$, then $\mathcal{I}-d(x,A')=1$ for every measurable cover $A'$ of $A$.
	\end{lma}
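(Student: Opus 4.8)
The plan is to prove this by contradiction, turning the hypothesis inside out through the definition of the topology $\mathscr{U}$ and then feeding the cover $A'$ back into the definition of $\mathcal{I}$-sparseness. Since $A$ is $\mathcal{I}/\mathfrak{s}$-open and $x \in A$, the definition of $\mathscr{U}$ gives immediately that $\mathbb{R}\setminus A \in \mathcal{S}_{\mathcal{I}}(x)$; this is the only structural fact about $A$ that I will use. I would also record the elementary computation $\mathcal{I}-d(x,\mathbb{R})=1$: for every sequence $\{J_n\}$ of closed intervals about $x$ with $\mathscr{S}(J_n)\in\mathcal{F}(\mathcal{I})$ one has $\lambda(J_n\cap\mathbb{R})/\lambda(J_n)=1$ on $\mathscr{S}(J_n)$, so both the $\mathcal{I}$-limsup and the $\mathcal{I}$-liminf of the defining sequence equal $1$.

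First I would fix an arbitrary measurable cover $A'$ of $A$ and assume, for contradiction, that $\mathcal{I}-d(x,A')<1$. The key observation is that $A'$ is then a legitimate test set in Definition \ref{s3} applied to the $\mathcal{I}$-sparse set $\mathbb{R}\setminus A$: it is measurable and satisfies $\mathcal{I}-d(x,A')<1$. Taking $B=A'$ and choosing any measurable cover $C$ of $\mathbb{R}\setminus A$ (one exists by \cite{halmos}), the $\mathcal{I}$-sparseness of $\mathbb{R}\setminus A$ at $x$ yields $\mathcal{I}-d(x,C\cup A')<1$.

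The contradiction then comes from the set-theoretic identity $C\cup A'=\mathbb{R}$. Indeed, $C\supseteq\mathbb{R}\setminus A$ because $C$ covers $\mathbb{R}\setminus A$, while $A'\supseteq A$ because $A'$ covers $A$, so $C\cup A'\supseteq(\mathbb{R}\setminus A)\cup A=\mathbb{R}$, and the reverse inclusion is automatic. Hence $\mathcal{I}-d(x,C\cup A')=\mathcal{I}-d(x,\mathbb{R})=1$, which flatly contradicts $\mathcal{I}-d(x,C\cup A')<1$. Therefore $\mathcal{I}-d(x,A')=1$, and since $A'$ was an arbitrary measurable cover of $A$, the lemma follows.

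I do not anticipate a genuine obstacle here: the argument is short once one spots that the correct object to substitute for $B$ is the cover $A'$ itself rather than a cover of the complement. The one point worth emphasising is that $C\cup A'$ exhausts $\mathbb{R}$ no matter which measurable covers $C$ and $A'$ are chosen, which is exactly what renders the proof indifferent to whether the clause ``$C$ is any measurable cover'' in Definition \ref{s3} is read existentially or universally.
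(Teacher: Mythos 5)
Your proposal is correct and follows essentially the same route as the paper: assume $\mathcal{I}-d(x,A')<1$, feed $B=A'$ into Definition \ref{s3} applied to the sparse set $\mathbb{R}\setminus A$, and derive a contradiction because the resulting union is all of $\mathbb{R}$, which has $\mathcal{I}$-density $1$ at $x$. If anything, your version is slightly more careful than the paper's, which writes $\mathcal{I}-d\bigl(x,A'\cup(\mathbb{R}\setminus A)\bigr)<1$ directly even though $\mathbb{R}\setminus A$ need not be measurable, whereas you correctly route the argument through a measurable cover $C$ of $\mathbb{R}\setminus A$ and observe that $C\cup A'=\mathbb{R}$ regardless of the choice of covers.
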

	\begin{proof}
		Suppose that for any measurable cover $A'$ of $A$ and $x \in A$,  $\mathcal{I}-d(x,A')<1$. Since $A \in \mathscr{U}$ so $\mathbb{R} \setminus A \in \mathcal{S}_{\mathcal{I}}(x)$ for all $x \in A$. So by Definition \ref{s3} of $\mathcal{I}$-sparse set, $\mathcal{I}-d(x,A' \cup (\mathbb{R} \setminus A))<1$. This is a contradiction, since $A' \cup (\mathbb{R} \setminus A)= \mathbb{R}$. This proves the lemma.
	\end{proof}
	
	\begin{thm}
		$(\mathbb{R}, \mathscr{U})$ is not first countable.
	\end{thm}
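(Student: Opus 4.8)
The plan is to adapt the classical argument that the density topology fails to be first countable: at a fixed point we manufacture, out of any countable family of open neighbourhoods, a new $\mathcal{I}/\mathfrak{s}$-open neighbourhood that contains none of them. First I would fix an arbitrary point $x \in \mathbb{R}$ and argue by contradiction, assuming that $\{U_n\}_{n \in \mathbb{N}}$ is a countable local base at $x$ consisting of $\mathcal{I}/\mathfrak{s}$-open sets, each with $x \in U_n$.

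The crucial preliminary step, and the main obstacle, is to show that every $\mathcal{I}/\mathfrak{s}$-open set $U$ with $x \in U$ meets every deleted Euclidean neighbourhood of $x$; that is, for each $\delta>0$ there is a point of $U$ lying in $(x-\delta,x+\delta)\setminus\{x\}$. I would establish this by contradiction. If $U \cap (x-\delta,x+\delta) \subseteq \{x\}$ for some $\delta>0$, then for any measurable cover $U'$ of $U$ the set $U' \cap (x-\delta,x+\delta)$ is a measurable cover of $U \cap (x-\delta,x+\delta) \subseteq \{x\}$, hence is null. Now take any sequence $\{J_k\}$ of closed intervals about $x$ with $\mathscr{S}(J_k) \in \mathcal{F}(\mathcal{I})$. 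For indices $k \in \mathscr{S}(J_k)$ we have $\lambda(J_k) < \tfrac{1}{k}$, so once $\tfrac{1}{k}<\delta$ the interval $J_k$ lies inside $(x-\delta,x+\delta)$; since this excludes only finitely many indices and $\mathcal{I}$ is admissible, the set of $k$ with $J_k \subseteq (x-\delta,x+\delta)$ still belongs to $\mathcal{F}(\mathcal{I})$. For all such $k$ the ratio $\lambda(U' \cap J_k)/\lambda(J_k)$ vanishes, so this ratio is $0$ on a filter-large set of indices, forcing $\mathcal{I}-\limsup$ and $\mathcal{I}-\liminf$ of the sequence to be $0$ for every admissible choice of $\{J_k\}$. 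This yields $\mathcal{I}-d(x,U')=0$, contradicting Lemma \ref{s11}, which guarantees $\mathcal{I}-d(x,U')=1$. The delicate point is precisely this locality of the $\mathcal{I}$-density combined with the nullity of a measurable cover of a single point.

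With this claim in hand the diagonal argument closes quickly. For each $n$ I would pick $x_n \in U_n$ with $0<|x_n-x|<\tfrac{1}{n}$, and set $D=\{x_n : n \in \mathbb{N}\}$. Being countable, $D$ is measurable with $\lambda(D)=0$, so by Theorem \ref{s9} it is $\mathcal{I}/\mathfrak{s}$-closed; hence $W=\mathbb{R}\setminus D$ is $\mathcal{I}/\mathfrak{s}$-open, and $x \in W$ because $x \neq x_n$ for every $n$. Thus $W$ is an $\mathcal{I}/\mathfrak{s}$-open neighbourhood of $x$, and the local base property forces $U_m \subseteq W$ for some $m$. But $x_m \in U_m$ while $x_m \in D$, so $x_m \notin W$, a contradiction. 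Since $x$ was arbitrary, no point of $\mathbb{R}$ admits a countable local base, and therefore $(\mathbb{R},\mathscr{U})$ is not first countable.
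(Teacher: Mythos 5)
Your proof is correct and takes essentially the same route as the paper: both hinge on Lemma \ref{s11} to show an $\mathcal{I}/\mathfrak{s}$-open neighbourhood of $x$ cannot be too small (you show it meets every punctured Euclidean interval about $x$, the paper shows each basis element is uncountable, and in both cases the point is that a null measurable cover would force $\mathcal{I}-d(x,U')=0\neq 1$), and then both diagonalize by picking a point $x_n\neq x$ from each $U_n$ and invoking Theorem \ref{s9} to conclude that the resulting countable null set is $\mathcal{I}/\mathfrak{s}$-closed, so its complement is an open neighbourhood of $x$ containing no $U_m$. Your extra requirement $0<|x_n-x|<\tfrac{1}{n}$ (and hence the stronger accumulation claim) is harmless but unnecessary; any choice of $x_n\in U_n\setminus\{x\}$ suffices.
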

	\begin{proof}
		Let us take an arbitrary point $b \in \mathbb{R}$. We are to show there does not exists any countable basis at $b$. If possible, let us assume that there exists $\{U_1,U_2,\cdots,U_n, \cdots\}$ a countable basis at $b$. Then we claim that each $U_i$ is an uncountable set. Because if $U_i$ is countable then for any measurable cover $U_i '$ of $U_i$, $\lambda(U_i ')=0$ and so for any $x \in U_i$, $\mathcal{I}-d(x,U_i ')<1$. But, by Lemma \ref{s11}, since each $U_i$ is $\mathcal{I}/\mathfrak{s}$-open, so $\mathcal{I}-d(x,U_i ')=1$ for $x \in U_i$. This is a contradiction. Thus each $U_i$ is an uncountable set. 
		
		Now let us take $b_1 \neq b$ from $U_1$, $b_2 \neq b,b_1$ from $U_2$, $b_3 \neq b,b_1,b_2$ from $U_3$ and so on. If we consider the set $\mathscr{B}=\{b_1,b_2,b_3,\cdots\}$ then $\mathscr{B}$ is countable and so $\lambda(\mathscr{B})=0$. Therefore, by Theorem \ref{s9}, $\mathscr{B}$ is $\mathcal{I}/\mathfrak{s}$-closed. Let $V$ be any $\mathcal{I}/\mathfrak{s}$-open set containing $b$. Consider the set $V \setminus \mathscr{B}$. This set is nonempty, since $V$ is uncountable. Also $V \setminus \mathscr{B}$ is an $\mathcal{I}/\mathfrak{s}$-open set containing $b$. Clearly no $U_i$ is contained in $V \setminus \mathscr{B}$. This shows that  $\{U_1,U_2,\cdots,U_n, \cdots\}$ cannot be a basis at $b$. Hence $(\mathbb{R}, \mathscr{U})$ is not first countable. This proves the theorem.
	\end{proof}

	\section*{Acknowledgements}
	\textit{The first author is thankful to The Council of Scientific and Industrial Research (CSIR), Government of India, for giving the award of Senior Research Fellowship (File no. 09/025(0277)/2019-EMR-I) during the tenure of preparation of this research paper.}

\end{document}